\newcommand{\N}{\mathbb{N}}
\newcommand{\C}{\mathbb{C}}
\newcommand{\CC}[1]{\mathbb{C}^{#1}}
\newcommand{\SpC}{\mathrm{Sp}_{2n}(\mathbb{C})}
\newcommand{\Eu}[1]{\begin{pmatrix}
I_n&#1\\0&I_n
\end{pmatrix}}
\newcommand{\El}[1]{\begin{pmatrix}
I_n&0\\#1&I_n
\end{pmatrix}}
\newcommand{\ringofholo}{\mathcal{O}(X)}
\newcommand{\Symplecto}{\mathrm{Aut}_{Sp}(\C^{2n})}
\newcommand{\SpanD}{\mathrm{span}\{\Delta\}}
\newcommand{\Norm}[1]{\vert\vert #1 \vert\vert}
\DeclareMathOperator{\img}{Im}
\newtheorem{theorem}{Theorem}[section]
\newtheorem{lemma}[theorem]{Lemma}
\newtheorem{cor}[theorem]{Corollary}
\newtheorem{prop}[theorem]{Proposition}
\newtheorem{defi}[theorem]{Definition}
\newtheorem{Rem}[theorem]{Remark}
\newtheorem{Exa}[theorem]{Example}
\newtheorem{question}[theorem]{Question}
\newtheorem{thm}{Theorem}
\author{Rafael B.\ Andrist$^1$, Gaofeng Huang$^2$, \\ Frank Kutzschebauch$^2$ and Josua Schott$^2$}
\title{Parametric Symplectic Jet Interpolation\footnote{AMS MSC 2020: 14J42, 32E30, 32M17, 32Q56}}
\date{$^1$University of Ljubljana \\ $^2$University of Bern}
\begin{document}
\reversemarginpar
\setlength{\marginparwidth}{3 cm}
\maketitle
\begin{abstract}
We prove a parametric jet interpolation theorem for symplectic holomorphic automorphisms of $\mathbb{C}^{2n}$ with parameters in a Stein space. Moreover, we provide an example of an unavoidable set for symplectic holomorphic maps.
\end{abstract}

\section{Introduction}

Since the late 1980s the group of
holomorphic automorphisms of $\C^n, n\ge 2,$ has been studied intensively.
It was long well known that this group is enormously big. It acts infinitely transitively, which by definition means that it acts transitively on finite subsets of any cardinality.
Even finite jet-interpolation by holomorphic automorphisms at finitely many points has been proved 1999 by Forstneri\v{c} \cite{MR1760722}.  

However, the group of holomorphic automorphisms does 
not act transitively on infinite discrete subsets, as found out by Rosay and Rudin in \cite{RosayRudin}. They called the 
subsets in the orbit of the set
$N := \{ (i, 0, \ldots, 0), i \in \N \} \subset \C^n $ tame subsets, and they showed that not all infinite discrete subsets are tame. Furthermore, Buzzard and Forstneri\v{c} showed in \cite{MR1758584} that finite jet-interpolation by holomorphic automorphisms can be done simultaneously at all points of a tame discrete set in $\C^n$.
Finally, Ugolini \cite{UgoJetInterpolation} showed a parametric version of this result:
If the finite jets depend holomorphically on a Stein parameter $x \in X$,
then (under the topological condition that their linear parts are null-homotopic) the interpolating holomorphic automorphism can be chosen holomorphically depending on $x \in X$.

In contrast to the group of holomorphic automorphisms of $\C^n$, the group of symplectic holomorphic automorphisms of $\C^{2n}$ has been studied much less. Forstneri\v{c} \cite{Forstneric:Actions} proved that this group is also enormously big. In fact, he proved the so-called symplectic density property for $\C^{2n}$ (for more details see \cite{FoK:thefirst}*{Section 2.5}). In the present paper we prove the analog of Ugolini's parametric jet interpolation result for symplectic holomorphic automorphisms of $\C^{2n}$.

Let us introduce the symplectic holomorphic setting:
Denoting by  $(z_1,\dots,z_{2n})$ the coordinates of $\C^{2n}$, we call a holomorphic automorphism $F$ of $\C^{2n}$ a \textit{symplectic automorphism} if it preserves the symplectic form
\[ \omega = \sum_{i=1}^n dz_i\wedge dz_{n+i},\]
that is, $F^*\omega=\omega$, where $F^*$ denotes the pullback by $F$. We let $\Symplecto$ denote the group of symplectic automorphisms of $\C^{2n}$. 

A discrete sequence of points $\{a_j\}_{j\in \mathbb{N}}\subset \C^{2n}$ (i.e.\ without limit points in $\C^{2n}$) without repetition is called \textit{symplectically tame} if there exists a symplectic automorphism $F \in \Symplecto$ such that 
\[ F(a_j)=j\Delta\quad \text{for all } j\in \mathbb{N},\]
where $\Delta=(1,\dots,1)^t\in \C^{2n}$. 

Let $k$ be a natural number. A mapping $F \colon \C^{2n}\to \C^{2n}$ is called \textit{symplectic of order $k$ at a point $p\in \C^{2n}$} if
$F^*\omega-\omega = \sum_{i<j} g_{ij}\ dz_i\wedge dz_j$ with $g_{ij}(z)=O(|z-p|^k)$ for $z\to p$ and $1\leq i<j\leq n$. We extend this property to the parametric setting: Consider a Stein space $X$. When we mention Stein spaces in this paper, we always assume that they are finite-dimensional and reduced. A holomorphic mapping $F \colon X\times \C^{2n}\to \C^{2n}$ is called $X$-symplectic of order $k$ at $p$, if there exist holomorphic functions $g_{ij} \colon X\times \C^{2n}\to \C$, $1\leq i<j\leq n$, such that
\[ (F_x)^*\omega - \omega = \sum_{i<j} (g_{ij})_x dz_i\wedge dz_j,\quad x\in X,\]
and $(g_{ij})_x(z)=O(|z-p|^k)$ for every $x\in X$.

Let $p\in \C^{2n}$, $k\in \mathbb{N}$ and $f,g \colon \C^{2n}\to \C^{2n}$ be two holomorphic maps. We declare $f$ and $g$ to be equivalent, if they have the same Taylor polynomial of degree $k$ at $p$, that is, if
\[ f(z)-g(z) = O(|z-p|^{k+1}),\quad z\to p.\]
This induces an equivalence relation on the space of holomorphic maps $\C^{2n}\to\C^{2n}$. The equivalence classes are called $k$-jets at $p$. Let $J^k_{p,*}(\C^{2n})$ denote the space of all $k$-jets at $p$. By abuse of notation, we don't distinguish between the representing Taylor polynomial and the corresponding $k$-jet.  
For $q\in \C^{2n}$, let 
\[J^k_{p,q}(\C^{2n}) = \{ P\in  J^k_{p,*}(\C^{2n}): P(p)=q\}. \] 


The main result of our paper is a full analog of Ugolini's
Theorem 1.1. in \cite{UgoJetInterpolation} in the symplectic holomorphic setting. We first recall the following definition.

\begin{defi}
Let $X$ be a complex space. A map $F \colon X\to \Symplecto$ is called holomorphic if the evaluation map $F(x)(z)=:F_x(z)$ is holomorphic in the usual sense as a map $X\times \C^{2n}\to \C^{2n}$. 
\end{defi}

\begin{thm}\label{theorem1}
Let $X$ be a Stein space, $\{a_j\}_{j\in \mathbb{N}},\{b_j\}_{j\in \mathbb{N}}\subset \C^{2n}, \, n \in \mathbb{N},$ be symplectically tame sequences of points and $m_j\in \mathbb{N}_{+}, j>0$. For every $j\in \mathbb{N}$, let $P^j \colon X\to J_{a_j,b_j}^{m_j}(\C^{2n})$ be a holomorphic family of $m_j$-jets such that $P_x^j(a_j)=b_j$ for all $x\in X$. Assume that $P^j$ is $X$-symplectic of order $j$ at $a_j$ for every $j\in \mathbb{N}$. Then there exists a null-homotopic holomorphic $F \colon X\to \Symplecto$ such that
\[ F_x(z) = P_x^j(z) + O\left(\vert z-a_j\vert^{m_j+1}\right) \text{ for }z\to a_j, \ j\in \mathbb{N},\ x\in X    \]
if and only if the linear part map $Q^j \colon X\to \SpC$ of $P^j$ at $a_j$ is nullhomotopic for every $j\in \mathbb{N}$. 
\end{thm}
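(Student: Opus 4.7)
The necessity is immediate: from a null-homotopic $F \colon X \to \Symplecto$, the linear-part-at-$a_j$ map is exactly $Q^j$, so each $Q^j$ is null-homotopic. For sufficiency I would follow the blueprint of Ugolini's argument in \cite{UgoJetInterpolation} for $\mathrm{Aut}(\C^n)$, replacing the ordinary density property and Andersén-Lempert theorem by their symplectic counterparts from Forstneri\v{c} \cite{Forstneric:Actions}. As preparatory normalizations, the symplectic tameness of $\{a_j\}$ and $\{b_j\}$ lets me choose fixed $G, H \in \Symplecto$ with $G(j\Delta) = a_j$ and $H(b_j) = j\Delta$; conjugating the jets $P^j$ and the sought $F$ by $G$ and $H$, I may assume $a_j = b_j = j\Delta$ with $P^j_x(j\Delta) = j\Delta$. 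Using the null-homotopy of each $Q^j$ together with a parametric Oka principle for $\Symplecto$, I would also produce an initial null-homotopic $F^0 \colon X \to \Symplecto$ whose linear part at $j\Delta$ matches $Q^j_x$ for every $j$.

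The main body of the proof is an inductive construction of null-homotopic holomorphic families $F^j \colon X \to \Symplecto$ such that $F^j$ interpolates the full jets $P^1, \ldots, P^j$ at $\Delta, \ldots, j\Delta$, while $F^j_x$ differs from $F^{j-1}_x$ by a correction supported near $j\Delta$ whose norm, together with sufficiently many derivatives, is at most $2^{-j}$ on a compact exhaustion of $X \times \C^{2n}$ avoiding $i\Delta$ for $i \geq j$. Cauchy estimates then yield a convergent limit $F$ satisfying all requirements. At the $j$-th step I would form the local $X$-symplectic jet $C^j_x = (F^{j-1}_x)^{-1} \circ P^j_x$ (which is $X$-symplectic of order $j$ at $j\Delta$ by the hypothesis on $P^j$), upgrade it via a parametric Moser-type argument to a holomorphic family of germs of genuine $X$-symplectic biholomorphisms at $j\Delta$ agreeing with $C^j_x$ to order $m_j$, and then approximate this local family by a global $G^j_x \in \Symplecto$ matching the local family to order $m_j$ at $j\Delta$, agreeing with the identity to order $m_i+1$ at every earlier $i\Delta$, and close to the identity on the required compact set. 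Setting $F^j := F^{j-1} \circ G^j$ preserves null-homotopy because $G^j_x$ is built as a composition of time-one flows of complete holomorphic Hamiltonian vector fields, each canonically isotopic to the identity.

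The principal obstacle is this last approximation: a parametric symplectic Andersén-Lempert theorem with simultaneous jet-matching at finitely many points. The classical non-parametric version, available via Forstneri\v{c}'s symplectic density property, yields approximation by compositions of time-one flows of complete holomorphic Hamiltonian vector fields, but here one must also maintain holomorphic dependence on the Stein parameter $x \in X$ and ensure that the approximating automorphism agrees with the identity to prescribed order at the previously treated points $\Delta, \ldots, (j-1)\Delta$. The usual multiplicative cut-off trick from the non-symplectic case is unavailable, since cut-offs destroy the symplectic condition; instead, the jet-preservation at earlier points has to be built in at the level of the Hamiltonians themselves, by selecting generators that vanish to sufficiently high order at each $i\Delta$ with $i < j$. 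Establishing this combined parametric-and-jet-matching approximation is the heart of the proof; once it is in place, the inductive scheme above produces the desired $F$.
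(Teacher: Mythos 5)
Your necessity direction and outer inductive scaffolding (normalize to $a_j=b_j=j\Delta$ by tameness, exhaust $X$ and $\C^{2n}$, build $F$ as a convergent infinite composition with controls near earlier points) match the paper's Section 4 construction closely, and you correctly diagnose that the usual cut-off trick fails in the symplectic setting. However, what you identify as the ``principal obstacle'' — finite-point parametric symplectic interpolation with jet-matching and approximation — is precisely what the paper proves as Proposition \ref{theorem:finiteInterpol}, and your proposal leaves it unresolved. Two concrete technical inputs are missing, and neither is a routine adaptation of Ugolini.

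First, for the linear part of the jet you invoke a ``parametric Oka principle for $\Symplecto$,'' but what is actually required is Schott's Symplectic Vaserstein factorization (Theorem \ref{Vaserstein}): a null-homotopic holomorphic map $Q \colon X \to \SpC$ factors as a \emph{finite} product of elementary symplectic matrices $\bigl(\begin{smallmatrix} I_n & \alpha\tilde E_{ij} \\ 0 & I_n\end{smallmatrix}\bigr)$, $\bigl(\begin{smallmatrix} I_n & 0 \\ \alpha\tilde E_{ij} & I_n\end{smallmatrix}\bigr)$. Each such factor is then realized as the linear part of an explicit symplectic shear $z\mapsto z + f(x,\lambda_v(z))v$, and Ugolini's Lemma \ref{lemma:magicTool} is used to prescribe the shear generator $f$ to be small on compacts, flat at the finitely many sidestepped points, and vanishing on the remaining discrete sequence — all simultaneously. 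Without the Vaserstein theorem the linear step does not go through, and this is a deep result proved only recently.

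Second, for the degree-$r$ step you propose a ``parametric Moser-type argument'' to upgrade the formal jet to a germ of a symplectic biholomorphism, followed by an Andersén–Lempert approximation. The paper avoids this detour entirely: by Proposition \ref{prop:Hamiltonian} (built on \cite{LowEtAl}), the degree-$r$ homogeneous correction, being Hamiltonian, is written \emph{exactly} as $\sum_j c_j(x)\,(b_j^t J z)^r\, b_j$ for vectors $b_j$ chosen so that $\{(b_j^t J z)^{r+1}\}$ is a basis of homogeneous polynomials, and then each summand is killed by one more explicit shear, again shaped by Lemma \ref{lemma:magicTool}. This gives exact jet matching (not just approximation) at the target point and exact flatness at the other finitely many points, which is what the induction needs; the $b_j$ can furthermore be perturbed so that $\lambda_{b_j}$ is injective and discrete on $\SpanD$, which is the structural reason the auxiliary points are taken in $\SpanD$ throughout. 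Your sketch neither supplies this decomposition nor explains why the approximating automorphism can be made to match the jet \emph{exactly} rather than approximately, which matters for the induction to close.

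In short, the outer architecture of your plan is sound, but the content of Proposition \ref{theorem:finiteInterpol} — Schott's factorization plus the Hamiltonian basis decomposition plus Lemma \ref{lemma:magicTool} — is the actual proof, and your proposal flags it as an open obstacle rather than resolving it.
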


A non-parametric version of our result at a single point instead of a tame subset has been proved by L\o w, Pereira, Peters and Wold in \cite{LowEtAl}. The proof of our Theorem uses a deep result (see Theorem \ref{Vaserstein}) proved for general $n$ by the fourth author \cite{Schott}
after it was proved for $n=2$ by Ivarsson, L\o w and the third author.

The paper is organized as follows. In Section 2 we
recall some known facts, in particular Theorem \ref{Vaserstein} by the fourth author, and prove some auxiliary results. Section 3 contains the proof of our interpolation result for finitely many points. The induction process which leads to the full proof of the main result is contained in Section 4. In Section 5 we give some examples of symplectically tame sets and formulate some open questions related to this notion. We also construct an example of an unavoidable set for symplectic holomorphic maps that fix the origin. 

%
%
\section{Preliminaries and Notations}
Let $(z_1,\dots,z_{2n})\in\C^{2n}$ denote the coordinates of the complex Euclidean vector space of dimension $2n$, and let $e_1,\dots,e_{2n}$ be the standard basis vectors. We equip $\C^{2n}$ with the standard symplectic form
\[ \omega = \sum_{i=1}^n dz_i\wedge dz_{n+i}.\]
A holomorphic automorphism $F$ of $\C^{2n}$ is called a \textit{symplectic automorphism} if it preserves the symplectic form, i.e.\   $F^*\omega=\omega$, where $F^*$ denotes the pullback by $F$. We let $\Symplecto$ denote the group of (holomorphic) symplectic automorphisms on $\C^{2n}$. It was shown by Forstneri\v{c} \cite{Forstneric:Actions}*{Theorem 5.1} that $\Symplecto$ contains a dense subgroup generated by shears of the form 
\begin{align}\label{equation:symplectomorph}
 F(z):=z+f(z^tJv)v,
\end{align}
where $v\in \C^{2n}$, $f \colon \C\to\C$ is an entire function, $z^t$ denotes the transpose of $z$ and $J$ is the block matrix
\[ J = \begin{pmatrix}
0 & I_n\\ -I_n & 0
\end{pmatrix}\]
where $I_n$ is the $n\times n$ identity matrix and $0$ the $n\times n$ zero matrix. We introduce the following notation. For $v\in \C^{2n}$ let $\lambda_v(z):=z^tJv$. Then (\ref{equation:symplectomorph}) can be written as $F(z)=z+f(\lambda_v(z))v$.

%
%

We follow the convention of Rosay and Rudin \cite{RosayRudin} and call a set $E \subset \C^{2n}$ \emph{discrete} if $E$ has no limit points in $\C^{2n}$.

\begin{defi}
A discrete sequence of points $\{a_j\}_{j\in \mathbb{N}}\subset \C^{2n}$ without repetition is \textit{symplectically tame} if there exists a symplectic automorphism $F\in \Symplecto$ such that 
\[ F(a_j)=j\Delta\quad \text{for all } j\in \mathbb{N},\]
where $\Delta:=(1,\dots,1)^t\in \C^{2n}$. 
\end{defi}

\begin{Rem}
This definition is equivalent to the one given in \cite{AndristUgolini}*{Def.\ 3.2}, where $\Delta$ is replaced by the unit vector $e_1$. To see this, note that the symplectic automorphism
\[ \Psi(z):= z + \lambda_{v}(z) v,\quad v:=\Delta-e_1,\]
maps $\alpha e_1$ to $\alpha\Delta$ for every $\alpha\in \C$.
\end{Rem}

The following proposition is an extension of \cite{AndristUgolini}*{Lemma 3.3} (a symplectic analog of \cite{RosayRudin}*{Proposition 3.1}) which says that any two discrete sequences in $\SpanD$ can be permuted by an automorphism $F\in \Symplecto$. In addition, we require that $F$ agrees with a translation up to a given order $m_j$ at every $j\Delta$.

\begin{prop}\label{prop:symptame}
Let $\{c_j\}_{j\in \mathbb{N}}\subset \SpanD$ be a discrete sequence (without repetition) and  ${\{m_j\}_{j\in \mathbb{N}}\subset \mathbb{N}}$. Then there exists an automorphism $F\in \Symplecto$ such that
\[ F(z)=c_j+(z-j\Delta) + O(|z-j\Delta|^{m_j+1}),\quad z\to j\Delta, j\in \mathbb{N}.\]
\end{prop}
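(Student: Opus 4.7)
Writing $c_j=\alpha_j\Delta$ with $\{\alpha_j\}\subset\C$ distinct, the idea is to realize $F$ as a composition of symplectic shears of the form \eqref{equation:symplectomorph}. The main difficulty at the outset is that $\SpanD$ is isotropic for $\omega$, so $\lambda_\Delta\equiv 0$ on $\SpanD$: a single shear $z\mapsto z+f(\lambda_\Delta(z))\Delta$ in the required direction $\Delta$ would fix every $j\Delta$. We circumvent this by sandwiching such a shear between two shears in a transversal direction. Fix $v_1\in\C^{2n}$ with $\kappa:=\Delta^tJv_1\ne 0$ (e.g.\ $v_1=e_{n+1}$, giving $\kappa=-1$), fix distinct complex numbers $\{t_j\}_{j\in\N}$ (e.g.\ $t_j=j$), and seek $F=F_3\circ F_2\circ F_1$ with shear directions $v_1$, $\Delta$, $v_1$ in turn.

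\medskip

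Using $\Delta^tJ\Delta=v_1^tJv_1=0$ and $v_1^tJ\Delta=-\kappa$, a direct calculation yields
\begin{align*}
\lambda_{v_1}(j\Delta)=j\kappa,\qquad
\lambda_{\Delta}(j\Delta+t_jv_1)=-t_j\kappa,\qquad
\lambda_{v_1}(\alpha_j\Delta+t_jv_1)=\alpha_j\kappa.
\end{align*}
By the classical one-variable Weierstrass interpolation theorem with prescribed jets at a discrete set, choose entire functions $f_1,f_2,f_3$ satisfying, for every $j\in\N$,
\begin{align*}
f_1(j\kappa)=t_j,\qquad f_2(-t_j\kappa)=\alpha_j-j,\qquad f_3(\alpha_j\kappa)=-t_j,
\end{align*}
and whose derivatives of orders $1,\dots,m_j$ all vanish at the respective node. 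The three node sets are discrete and have pairwise distinct entries, because $\kappa\ne 0$ and $\{j\}$, $\{t_j\}$, $\{\alpha_j\}$ are discrete sequences of distinct complex numbers. Set $F_i(z):=z+f_i(\lambda_v(z))v$ with $v=v_1,\Delta,v_1$ in turn; each $F_i\in\Symplecto$ by \eqref{equation:symplectomorph}.

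\medskip

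Tracking the image of $j\Delta$ step by step yields $F_3(F_2(F_1(j\Delta)))=\alpha_j\Delta=c_j$, so the position matches. For the higher-order matching, the vanishing of the first $m_j$ derivatives of $f_i$ at its node translates, through a one-line Taylor expansion, into
\[ F_i(q+v) = F_i(q) + v + O(|v|^{m_j+1})\]
at each intermediate point $q$ along the orbit of $j\Delta$. Composing these three translation-like germs gives $F(j\Delta+v)=c_j+v+O(|v|^{m_j+1})$, which is precisely the claimed expansion. The conceptual obstacle is the isotropy observation that forces the three-step construction; once this is recognized, the proof reduces to three independent one-variable Weierstrass interpolation problems with discrete node sets.
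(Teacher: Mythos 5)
Your proposal is correct and takes essentially the same approach as the paper: the paper also composes three symplectic shears with directions transversal, parallel, transversal to $\Delta$ (it specializes $v_1$ to $\tilde\Delta := J\Delta$ and likewise takes $t_j = j$), applying Mittag-Leffler's osculation theorem to each of the three resulting one-variable interpolation problems along discrete node sets. Your observation that the isotropy of $\SpanD$ forces the detour, and your verification that each of the three node sets is discrete and repetition-free, match the paper's reasoning.
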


\begin{proof}
For each $j\in \mathbb{N}$ there exists $\gamma_j\in \C$ such that $c_j=\gamma_j\Delta$, by assumption. Let us write $\tilde{\Delta}:=J\Delta$. Observe that $\lambda_{\tilde{\Delta}}(j\Delta)=2nj$, $j\in \mathbb{N}$, is a discrete sequence in $\C$. By Mittag-Leffler's osculation theorem, there exists a holomorphic function $f_1 \colon \C \to \C$ such that
\[f_1(\zeta)=j+O(|\zeta - 2nj|^{m_j+1}),\quad \zeta \to 2nj,\quad j\in \mathbb{N}.\]
The symplectic automorphism $\Psi_1(z):=z+f_1(\lambda_{\tilde{\Delta}}(z))\tilde{\Delta}$ satisfies
\[\Psi_1(z)=j(\Delta+\tilde{\Delta}) + (z-j\Delta) + O(|z-j\Delta|^{m_j+1}),\quad z\to j\Delta,\]
that is, it maps $j\Delta$ to $j(\Delta+\tilde{\Delta})$ and agrees with a translation to order $m_j+1$ at $j\Delta$.

Similarly, we note that $\lambda_{\Delta}(j(\Delta+\tilde{\Delta}))=2nj$, $j\in \mathbb{N}$, is again a discrete sequence in $\C$. With the same argument as in the previous step, there exists a holomorphic function $f_2$ such that \[f_2(\zeta)=(\gamma_j-j) + O(|\zeta-2nj|^{m_j+1}),\quad\zeta \to 2nj,\quad j>0.\] Hence the symplectic automorphism $\Psi_2(z):=z+f_2(\lambda_{\Delta}(z))\Delta$ maps $j(\Delta+\tilde{\Delta})$ to $\gamma_j\Delta+j\tilde{\Delta}$ and agrees with a translation to order $m_j+1$ at $j(\Delta+\tilde{\Delta})$. 

Once more, we have a discrete sequence $\lambda_{\tilde{\Delta}}(\gamma_j\Delta + j\tilde{\Delta})=-2n \gamma_j$, $j\in \mathbb{N}$, in $\C$. Another application of Mittag-Leffler's osculation theorem implies the existence of a holomorphic function $f_3$ such that \[f_3(\zeta)=j+O(|\zeta  +2n\gamma_j|^{m_j+1}),\quad \zeta \to  -2n\gamma_j,\quad j>0.\]
The symplectic automorphism $\Psi_3(z):=z+f_3(\lambda_{\tilde{\Delta}}(z))\tilde{\Delta}$ maps $\gamma_j\Delta + j\tilde{\Delta}$ to $\gamma_j\Delta$ and agrees to a translation to order $m_j+1$ at $\gamma_j\Delta + j\tilde{\Delta}$.
Then, $\Psi:=\Psi_3\circ \Psi_2\circ \Psi_1$ is the desired symplectic automorphism, which maps $j\Delta$ to $c_j$ and agrees with a translation to order $m_j+1$ at $j\Delta$.
\end{proof}

Recall that a holomorphic mapping $F \colon X\times\C^{2n}\to \C^{2n}$ is $X$-symplectic of order $k$, $k\in \mathbb{N}$, at a point $p\in \C^{2n}$ if there exist holomorphic functions $g_{ij} \colon X\times \C^{2n}\to \C$, $1\leq i<j\leq n$, such that
\[ (F_x)^*\omega - \omega = \sum_{i<j} (g_{ij})_x dz_i\wedge dz_j,\quad x\in X,\]
and $(g_{ij})_x(z)=O(|z-p|^k)$ for every $x\in X$.
\begin{lemma}
Let $X$ be a Stein space, $k\in \mathbb{N}$, $p\in \C^{2n}$ and $F \colon X \times \C^{2n}\to \C^{2n}$ a holomorphic mapping,
\[ F_x(z) = F_x(p) + L_x\cdot (z-p) + O(|z-p|^2),\quad z\to p,\quad  x\in X,\]
where $L_x$ is some $2n\times 2n$-matrix depending holomorphically on $x$. Assume that $F$ is $X$-symplectic of order $k$ at $p\in \C^{2n}$. Then the linear part map $Q_x=L_x\cdot (z-p) $ of $F_x$ at $p$ is symplectic for every $x\in X$, and in particular, $L_x$ gives rise to a holomorphic mapping $L \colon X\to \SpC$.
\end{lemma}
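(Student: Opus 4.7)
The idea is that the $X$-symplectic condition of order $k \geq 1$, once evaluated at the distinguished point $p$, collapses into a pointwise linear-algebra identity on $L_x$. Since each coefficient satisfies $(g_{ij})_x(z) = O(|z-p|^k)$ with $k \geq 1$, in particular $(g_{ij})_x(p) = 0$ for every $x \in X$. Therefore evaluating the identity
\[ (F_x)^*\omega - \omega = \sum_{i<j} (g_{ij})_x\, dz_i \wedge dz_j \]
at $z=p$ yields $((F_x)^*\omega)_p = \omega_p$.

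Next, I would compute the left-hand side from the Taylor expansion. Since $F_x(z) = F_x(p) + L_x(z-p) + O(|z-p|^2)$, the differential of $F_x$ at $p$ is exactly $L_x$, the quadratic remainder contributing nothing to the first derivative at $p$. Consequently, for any tangent vectors $u,v \in \C^{2n}$,
\[ ((F_x)^*\omega)_p(u,v) = \omega(L_x u,\, L_x v) = u^t L_x^t J L_x v. \]
Combined with the previous step this gives $u^t L_x^t J L_x v = u^t J v$ for all $u,v \in \C^{2n}$, which is precisely the defining equation $L_x^t J L_x = J$ of $\SpC$. Thus $L_x \in \SpC$ for each $x \in X$.

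Holomorphic dependence is automatic: the entries of $L_x$ are the partial derivatives $\partial F_x^i / \partial z_j$ evaluated at $z=p$, and these are holomorphic in $x$ because $F$ is holomorphic on $X \times \C^{2n}$. Hence $x \mapsto L_x$ is a holomorphic map from $X$ into the closed matrix subgroup $\SpC$. There is essentially no technical obstacle here: the entire content of the lemma lies in the elementary observation that $k$-th order vanishing of the $(g_{ij})_x$ at $p$ forces their pointwise vanishing at $p$, after which the symplectic identity for the linear part drops out of the chain rule.
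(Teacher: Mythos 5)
Your proof is correct and follows essentially the same route as the paper's: evaluate the $X$-symplectic identity at $z=p$, using $k\ge 1$ to kill the $(g_{ij})_x$ there, and observe that the pullback of $\omega$ at $p$ depends only on the differential $L_x$, yielding $L_x^t J L_x = J$. The paper phrases this by first decomposing $F_x^*\omega - \omega$ as $(Q_x^*\omega - \omega)$ plus a remainder vanishing at $p$, but that is merely a presentational variant of the same evaluation-at-$p$ argument.
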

\begin{proof}
A calculation in coordinates shows
\begin{equation}\label{equation:symplecticOforder}
F_x^*\omega - \omega = Q_x^*\omega-\omega + \sum_{i<j}(\tilde{g}_{ij})_x\  dz_i\wedge dz_j,
\end{equation}
where $\tilde{g}_{ij} \colon X\times \C^{2n}\to \C$ are some holomorphic functions with $(\tilde{g}_{ij})_x(z)=O(|z-p|)$, $z\to p$, for every $x\in X$. Moreover, the coefficients of $Q_x^*\omega-\omega$ are constant in $z$, as the matrix $L_x$ is constant in $z$.

Furthermore, the left-hand side in (\ref{equation:symplecticOforder}) equals $\sum_{i<j} (g_{ij})_x\ dz_i\wedge dz_j$, for some holomorphic functions $g_{ij} \colon X\times \C^{2n}\to \C$ with $(g_{ij})_x(z)=O(|z-p|^k)$, for every $x\in X$, by assumption. Choose $z=p$ to conclude $Q_x^*\omega-\omega \equiv 0$. Thus $Q_x$ is symplectic and, in particular, $L_x$ is a symplectic matrix for every $x\in X$.
\end{proof}

\begin{prop}\label{prop:Hamiltonian}
Let $X$ be a Stein space, $k\in \mathbb{N}$, $\hat{N}:= \genfrac(){0pt}{0}{2n+k}{2n-1}$ and let $P \colon X\times \C^{2n}\to \C^{2n}$ be a holomorphic mapping such that
\[ P_x(z) = z + P_x^k(z) + O(|z|^{k+1}),\quad z\to 0,\quad x\in X,\]
where $P_x^k$ is an $k$-homogeneous polynomial mapping on $\C^{2n}$. Assume that $P$ is $X$-symplectic of order $k$ at the origin. Then there exist $b_1,\dots,b_{\hat{N}}\in \C^{2n}$ and holomorphic functions $c_1,\dots,c_{\hat{N}} \colon X \to \C$ such that
\begin{align}\label{equationP}
    P_x^k(z) = \sum_{j=1}^{\hat{N}} c_j(x) \cdot (b_j^tJz)^k\cdot b_j,\quad \forall x\in X.
\end{align}
\end{prop}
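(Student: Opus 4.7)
The plan is to recognize $P_x^k$ as a Hamiltonian vector field and then to expand its Hamiltonian in a basis of $(k{+}1)$-st powers of linear forms of the form $(b^t J z)^{k+1}$.

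First I would unpack the $X$-symplectic of order $k$ hypothesis at $p=0$. Writing $P_x(z) = z + P_x^k(z) + O(|z|^{k+1})$ and grading $P_x^*\omega - \omega$ by total degree in $z$, a direct computation shows that (for $k\ge 2$) its degree-$(k-1)$ component is exactly
\[ \sum_{i=1}^n \bigl(dz_i \wedge dP_{x,n+i}^k + dP_{x,i}^k \wedge dz_{n+i}\bigr), \]
all other cross terms contributing only in degree $\ge k$. The hypothesis $(g_{ij})_x(z)=O(|z|^k)$ forces this 2-form to vanish identically. But it is precisely $d(\iota_{V_x}\omega)$ for the vector field $V_x := \sum_i P_{x,i}^k \partial_{z_i}$, so $\iota_{V_x}\omega$ is a closed $1$-form on the simply connected space $\C^{2n}$, hence exact: $\iota_{V_x}\omega = dH_x$, i.e.\ $V_x = J\nabla H_x$ is a Hamiltonian vector field.

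Second, Euler's identity pins down $H_x$ explicitly. Since we may take $H_x$ to be homogeneous of degree $k+1$, the identity $z^t\nabla H_x = (k+1) H_x$ combined with $J^2 = -I$ gives the closed formula
\[ H_x(z) = -\tfrac{1}{k+1}\, z^t J\, P_x^k(z), \]
which is a homogeneous polynomial in $z$ of degree $k+1$ depending holomorphically on $x\in X$, since it is obtained from $P_x^k$ by a purely algebraic operation.

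Third, the conclusion reduces to a classical spanning statement. Using $J^t = -J$ and $J^2 = -I$, for any $b\in\C^{2n}$ one computes
\[ J\nabla\bigl[(b^t J z)^{k+1}\bigr] = (k+1)(b^t J z)^k\, b. \]
Therefore, once $H_x$ is expanded as $H_x(z) = \sum_j \tilde c_j(x)(b_j^t J z)^{k+1}$ in a basis of the space $\mathcal{P}^{k+1}(\C^{2n})$ of homogeneous polynomials of degree $k+1$ built from such powers, taking the associated Hamiltonian vector field immediately yields the claimed formula with $c_j(x) = (k+1)\tilde c_j(x)$. The dimension count $\dim\mathcal{P}^{k+1}(\C^{2n}) = \binom{2n+k}{2n-1} = \hat N$ is a routine monomial count. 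The principal obstacle is the existence of such a basis: because $J$ is invertible, the map $b\mapsto b^t J z$ is a linear isomorphism $\C^{2n} \to (\C^{2n})^*$, so the problem reduces to the classical polarization fact that the $(k+1)$-st powers of linear forms span $\mathrm{Sym}^{k+1}((\C^{2n})^*)$ in characteristic zero; from this spanning family one extracts a basis of size $\hat N$. Finally, holomorphic dependence of $\tilde c_j$ on $x$ is automatic, because the coefficients of $H_x$ depend holomorphically on $x$ and basis expansion is a $\C$-linear operation.
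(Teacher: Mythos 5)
Your argument is correct and is essentially the same as the paper's: identify $P_x^k$ as a Hamiltonian vector field (the paper cites \cite{LowEtAl}*{Lemma 3.1}; you re-derive the degree-$(k-1)$ calculation), extract a polynomial Hamiltonian $H_x$, and expand it in a basis of $(k+1)$-st powers of the linear forms $b_j^tJz$ (the paper cites \cite{LowEtAl}*{Lemma 2.1}; you invoke polarization). Your explicit formula $H_x(z) = -\tfrac{1}{k+1}\,z^tJP_x^k(z)$ via Euler's identity is a clean alternative to the paper's normalization $H_x(0)=0$, and the $k\ge 2$ caveat you flag for the grading step is implicit in the paper as well, since the proposition is only invoked in the induction step with $k\ge 2$.
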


\begin{proof}
Observe that $P_x^k$ can be interpreted as a polynomial vector field on $\C^{2n}$. According to \cite{LowEtAl}*{Lemma 3.1}, $P_x^k$ is a symplectic vector field for every $x\in X$, i.e.\ $d(\iota_{P_x^k}\omega)=0$, since $P_x$ is symplectic of order $k$ at the origin. Since $H^1(\C^{2n},\C)=0$, there exists a holomorphic (even polynomial) Hamiltonian $H_x \colon \C^{2n}\to \C$ such that $P_x^k = J\cdot DH_x$, where $DH_x = (\frac{\partial H_x}{\partial z_1},\dots, \frac{\partial H_x}{\partial z_{2n}})^t$. $H_x$ is unique up to a constant term. We can therefore choose $H_x$ such that $H_x(0)= 0$. This choice implies that $H_x$ is $(k+1)$-homogeneous. Moreover, it implies that $H(x,z):=H_x(z)$ is holomorphic as a map $X\times \C^{2n}\to \C$, since $P_x^k$ depends holomorphically on $x$. 
We can choose $b_1,\dots,b_{\hat{N}}\in \C^{2n}$ such that $\{(b_j^tz)^{k+1}: j=1,\dots,\hat{N}\}$ forms a basis of the vector space of $(k+1)$-homogeneous polynomials in $2n$ variables, by \cite{LowEtAl}*{Lemma 2.1}. Since $J$ is an isomorphism, $\{(b_j^tJz)^{k+1}: j=1,\dots,\hat{N}\}$ is also a basis.

Hence
\[ H_x(z) = \sum_{j=1}^{\hat{N}} \tilde{c}_{j,x}\cdot (b_j^tJz)^{k+1}\]
and the coefficients $\tilde{c}_{j,x}$ depend holomorphically on $x\in X$. Observe that
\[ D((b_j^tz)^{k+1}) = (k+1)(b_j^tz)^k\cdot b_j\]
and therefore
\[P_x^k(z) = J\cdot DH_x(z) = -\sum_{j=1}^{\hat{N}} (k+1)\tilde{c}_{j,x}\cdot (b_j^tJz)^k b_j,\]
since $J^2=-I_{2n}$. Defining
\[ c_j(x):= -(k+1)\tilde{c}_{j,x}\]
implies formula (\ref{equationP}).

The dimension of the vector space of $k$-homogeneous polynomial Hamiltonian vector fields on $\C^{2n}$ is given by
\[ \hat{N}:= \genfrac(){0pt}{0}{2n+k}{2n-1}\]
(see \cite{Lin}*{Remark 3.9}).
\end{proof}


The next result is \cite{UgoJetInterpolation}*{Lemma 2.6} and turns out to be very helpful.
\begin{lemma}\label{lemma:magicTool}
Let $T\subset X$ be a compact set and $K\subset \C$ a convex compact set such that $0\not\in K$. Let $\{a_i\}_{i=0}^m\subset \C\setminus \{0\}$ and $\{c_j\}_{j\in \mathbb{N}}\subset \C\setminus \{0\}$ be a discrete sequence. Given $\beta\in \ringofholo$, $\epsilon>0$ and $r,N\in \mathbb{N}$, there exists a holomorphic $f \colon X\times \C\to \C$ such that
\begin{enumerate}[label=(\roman*)]
\item $|f|_{T\times K}<\epsilon$
\item $f_x(\zeta) = \beta(x)\zeta^r + O(|\zeta|^{r+1})$ for $\zeta \to 0$
\item $f_x(\zeta) = O(|\zeta - a_i|^N)$ for $\zeta \to a_i$, $i=1,\dots,m$
\item $f_x(c_j)=0$, for $j\in \mathbb{N}$.
\end{enumerate}
\end{lemma}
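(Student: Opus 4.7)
The plan is to exhibit $f$ as a product of three factors: $\beta(x)\,\zeta^r$ carries the parametric dependence and secures the leading Taylor coefficient demanded by (ii); an entire function $\psi(\zeta)$ encodes the vanishing data of (iii) and (iv); and a nowhere-vanishing exponential $e^{-M\alpha\zeta}$ supplies the smallness on $T\times K$ required by (i). Decoupling the three requirements in this way isolates the obstructions: the parametric data enters only through $\beta$, the zero/multiplicity data is delivered by Weierstrass, and smallness on $K$ is purchased by moving far along a supporting affine line.

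Since $K\subset\C$ is compact and convex with $0\notin K$, real-affine separation of $\{0\}$ from $K$ yields $\alpha\in\C$ with $\mathrm{Re}(\alpha\zeta)\ge 1$ for all $\zeta\in K$, while $\mathrm{Re}(\alpha\cdot 0)=0$. For any $M>0$ set $\phi_M(\zeta):=e^{-M\alpha\zeta}$, so that $\phi_M(0)=1$, $|\phi_M|_K\le e^{-M}$, and $\phi_M$ is nowhere zero. By the Weierstrass product theorem there is an entire function $\psi_0$ vanishing to order $N$ at each $a_i$, to order $1$ at each $c_j$, and nowhere else; since $0$ lies outside this zero set one may normalize $\psi:=\psi_0/\psi_0(0)$ to achieve $\psi(0)=1$. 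Define
\[ f(x,\zeta) := \beta(x)\,\zeta^r\,\psi(\zeta)\,\phi_M(\zeta). \]

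Conditions (iii) and (iv) follow immediately from the zero orders of $\psi$, together with the fact that $\phi_M$ is nowhere zero and so cannot cancel those zeros. Condition (ii) follows from $\psi(0)=\phi_M(0)=1$, which forces the Taylor expansion of $f_x$ at $\zeta=0$ to begin with $\beta(x)\zeta^r$. For (i), one estimates
\[ |f|_{T\times K}\,\le\,\|\beta\|_T\cdot \max_{\zeta\in K}|\zeta|^r\cdot \max_{\zeta\in K}|\psi(\zeta)|\cdot e^{-M}, \]
and chooses $M$ large enough to make the right-hand side less than $\epsilon$; the factor $\max_K|\psi|$ is finite and independent of $M$, so a single choice of $M$ suffices. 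I do not anticipate any substantive obstacle: the crucial point is that a nowhere-vanishing exponential, built from the separating affine functional guaranteed by convexity of $K$ together with $0\notin K$, can enforce (i) without interfering with the normalizations and zero orders needed for (ii)--(iv).
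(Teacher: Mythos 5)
The paper does not prove this lemma; it is quoted verbatim from Ugolini's parametric jet-interpolation paper (his Lemma 2.6), so there is no in-house proof to compare against. Your argument is correct and self-contained. The three-factor decomposition cleanly isolates the constraints: $\beta(x)\zeta^r$ carries both the parametric dependence and the leading Taylor coefficient at $0$; the Weierstrass product $\psi$, normalized so that $\psi(0)=1$ (possible because all $a_i$ and $c_j$ avoid $0$), delivers the zero orders for (iii)--(iv); and the exponential $e^{-M\alpha\zeta}$, with $\alpha$ obtained from the separating-hyperplane theorem applied to the compact convex set $K$ with $0\notin K$, is nowhere vanishing, equals $1$ at $0$, and is uniformly $\le e^{-M}$ on $K$, so taking $M$ large secures (i) without disturbing (ii)--(iv). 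Two small points are left implicit but harmless: if some $a_i$ coincides with some $c_j$, the Weierstrass data at that point should be the maximum multiplicity $\max(N,1)=N$, which requires no change; and the factor $\zeta^r$ contributes no unwanted zero order at the interpolation points precisely because $a_i,c_j\ne 0$. Ugolini's own route is closely related but typically packages the smallness on $K$ via a Runge-type approximation rather than the explicit exponential; your exponential trick is arguably more elementary and makes the estimate in (i) completely transparent.
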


In the following we discuss the factorization of symplectic matrices into elementary factors. For a symmetric $n\times n$-matrix $A$, i.e.\ $A^t=A$, the matrices
\begin{align}\label{elemSymp}
\Eu{A},\quad \El{A}
\end{align}
are symplectic. In \cite{Schott}, matrices of the form (\ref{elemSymp}) are called elementary symplectic matrices. However, for this paper we need even simpler matrices. We choose a basis $\{\tilde{E}_{ij}\}_{1\leq i\leq j\leq n}$ of the vector space of $n\times n$ symmetric matrices, where $\tilde{E}_{ij}$ is defined by
\[ \tilde{E}_{ij} := \begin{cases} E_{ij} + E_{ji} + E_{ii} + E_{jj} &i\neq j\\
E_{ii} & i=j\end{cases}, \quad 1\leq i\leq j \leq n\]
and $E_{ij}$ is the matrix having a one at entry $(i,j)$ and zeros elsewhere. In this article, we call matrices of the form
\begin{align}\label{elemSymp2}
 \Eu{\alpha \tilde{E}_{ij}},\quad \El{\alpha\tilde{E}_{ij}}\tag{$\star$}
\end{align}
\emph{elementary symplectic matrices}. Observe that each symplectic matrix of the form (\ref{elemSymp}) is a finite product of elementary symplectic matrices. This follows from the fact that
\[\Eu{A}\Eu{B} = \Eu{A+B}\]
for all symmetric matrices $A,B$.

\begin{theorem}[Symplectic Vaserstein Problem, \cite{Schott}*{Main Theorem}]\label{Vaserstein}Let $X$ be a finite dimensional Stein space. Then a holomorphic mapping $f \colon X\to \SpC$ factorizes into a finite product of elementary symplectic matrices if and only if $f$ is null-homotopic.
\end{theorem}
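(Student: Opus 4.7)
The ``only if'' direction is immediate: each elementary symplectic matrix $\Eu{\alpha \tilde{E}_{ij}}$ (resp.\ $\El{\alpha \tilde{E}_{ij}}$) is connected to the identity through the path $t\mapsto \Eu{t\alpha\tilde{E}_{ij}}$, $t\in[0,1]$, so any finite product is null-homotopic. The substantive direction is ``if'', and my plan is to transplant the Ivarsson--Kutzschebauch strategy for the $\SlC$-Vaserstein problem into the symplectic setting.

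\textbf{Step 1 (Non-parametric factorization).} I would first establish a uniform integer $N=N(n)$ such that every $M\in \SpC$ is a product of at most $N$ elementary symplectic matrices. The generic Gauss decomposition of $\SpC$ writes $M=\Eu{A_1}\El{B}\Eu{A_2}$ with symmetric $A_1,A_2,B$, and each such block splits into $n(n+1)/2$ factors of the form $\Eu{\alpha\tilde{E}_{ij}}$ via the additive identity $\Eu{A}\Eu{B}=\Eu{A+B}$ noted just before the theorem. Matrices in the non-generic stratum are reduced to this case by pre-multiplying with a symplectic Weyl element, which itself is a finite product of elementary factors.

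\textbf{Step 2 (Parametric lift via the Oka principle).} Fix a word pattern of length $N$ alternating upper/lower types with prescribed index pairs $(i_k,j_k)$. This yields a holomorphic multiplication map
\[
 \mu \colon \C^N \longrightarrow \SpC, \qquad (\alpha_1,\dots,\alpha_N) \longmapsto \prod_{k=1}^{N} E_{i_k j_k}(\alpha_k),
\]
surjective by Step~1 (possibly after combining several word patterns into a single stratified parameter space). A holomorphic factorization of $f\colon X\to\SpC$ is exactly a holomorphic lift of $f$ through $\mu$. I would then invoke Forstneri\v{c}'s Oka principle for stratified elliptic submersions to convert a \emph{continuous} lift into a holomorphic one. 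A continuous lift exists whenever $f$ is null-homotopic, since a null-homotopy of $f$ ending at $I_{2n}$ can be lifted trivially to the contractible total space $\C^N$ along the constant zero section over the endpoint.

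\textbf{The main obstacle.} The technical heart is the verification that $\mu$ is a stratified elliptic submersion. This requires stratifying $\SpC$ so that over each stratum the fiber of $\mu$ is a smooth algebraic variety carrying a dominating holomorphic spray. In the $\SlC$ setting, Ivarsson--Kutzschebauch constructed such sprays by exploiting the transitive action of unipotent subgroups on the Gauss fibers. The symplectic analog is harder for two reasons: the symmetry constraint on the blocks reduces the number of free parameters per elementary factor, and the $\SpC$-Bruhat stratification has more pieces to handle than the $\SlC$ one. One must therefore allow a larger $N$ and a finer stratification in order to retain enough tangent directions, and then build sprays that extend consistently across strata. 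Granted this spray construction---which is the main content of \cite{Schott}---Steps~1 and~2 assemble formally into the theorem.
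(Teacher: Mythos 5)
The statement you are trying to prove is not proved in this paper at all: the theorem carries the citation ``\cite{Schott}*{Main Theorem}'' and the introduction explicitly attributes it to the fourth author's separate work (after Ivarsson--L\o w--Kutzschebauch settled $n=2$). The present paper treats it as a black box; it does not contain a proof for you to match. So your comparison target is the external paper \cite{Schott}, not this one. There is, additionally, a small mismatch in the statement itself: \cite{Schott} works with elementary symplectic matrices of the unrestricted form $\Eu{A}$, $\El{A}$, while the present paper uses the finer generators $\Eu{\alpha\tilde E_{ij}}$, $\El{\alpha\tilde E_{ij}}$ and derives the version quoted here from the $\Eu{A}\Eu{B}=\Eu{A+B}$ identity stated just above the theorem. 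Your proposal silently uses the finer generators without noting this reduction.

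As a sketch of the external proof, your outline does capture the Ivarsson--Kutzschebauch template (uniform factorization length, product map $\mu\colon\C^N\to\SpC$, Oka principle for stratified elliptic submersions), and you correctly locate the hard part in the spray construction. But Step~2 has a genuine gap as written. You claim a continuous lift of $f$ through $\mu$ exists ``since a null-homotopy of $f$ ending at $I_{2n}$ can be lifted trivially to the contractible total space $\C^N$.'' Contractibility of the source of $\mu$ is irrelevant to lifting a homotopy \emph{in the target}: you need a covering-homotopy (Serre-fibration--type) property for $\mu$, and $\mu$ is emphatically not a fibration --- its fibers jump dimension across the Bruhat-type strata. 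Establishing the weaker stratified fibration property (and the corresponding stratified h-principle) is precisely the technical content you deferred to ``the main obstacle,'' so your Step~2 presupposes the very thing it is supposed to be reducing to. In the Ivarsson--Kutzschebauch and Schott arguments the existence of a continuous lift is itself a nontrivial step, obtained from the topological analysis of the strata and fibers, not from contractibility of $\C^N$.
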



Now we want to establish a relationship between elementary symplectic matrices and certain symplectic shears. For $1\leq i\leq j\leq n$, we define 
\[ \tilde{e}_{ij}=(-1)\cdot\begin{cases} e_i+e_j & i\neq j\\ e_i & i=j\end{cases} \quad \text{and} \quad \tilde{f}_{ij}=\begin{cases} e_{n+i}+e_{n+j} & i\neq j\\ e_{n+i} & i=j\end{cases}.\]
The factor $(-1)$ in the definition of $\tilde{e}_{ij}$ was chosen for purely technical reasons.
\begin{lemma}
Let $\alpha\in \C$ and $f \colon \C\to \C$ holomorphic with $f(\zeta) = \alpha\zeta + O(|\zeta|^2)$ for $\zeta \to 0$. Further, let $\Psi_1(z):=z+f(\lambda_{\tilde{e}_{ij}}(z))\tilde{e}_{ij}$ and $\Psi_2(z):=z+f(\lambda_{\tilde{f}_{ij}}(z))\tilde{f}_{ij}$. Then we have
\[ \Psi_k(z)= A_kz + O(|z|^2),\quad z\to 0,\ k=1,2\]
where 
\begin{align*}
A_1=\Eu{-\alpha\tilde{E}_{ij}},\quad A_2=\El{\alpha\tilde{E}_{ij}}.
\end{align*}
We call $\Psi_k$ a \emph{symplectic shear of $A_k$}.
\end{lemma}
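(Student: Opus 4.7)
The plan is to reduce the claim to computing the linear map $z \mapsto z + \alpha\,\lambda_v(z)\,v$ for each choice of $v$, and then matching it against the stated matrices by a direct block computation.

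First I would expand the shear: since $f(\zeta) = \alpha\zeta + O(|\zeta|^2)$ near $0$ and $\lambda_v(z) = z^t J v$ is linear in $z$ (hence $\lambda_v(z) = O(|z|)$), we obtain
\[ \Psi_k(z) = z + \alpha\,\lambda_v(z)\,v + O(|z|^2), \quad z\to 0, \]
with $v = \tilde e_{ij}$ for $k=1$ and $v = \tilde f_{ij}$ for $k=2$. So it is enough to show that the linear endomorphism $L_v(z) := z + \alpha\,\lambda_v(z)\,v$ equals $A_k$.

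Next I would compute $Jv$ explicitly, using that on the standard basis one has $J e_i = -e_{n+i}$ and $J e_{n+i} = e_i$ for $1 \le i \le n$. This gives $J\tilde e_{ij} = e_{n+i} + e_{n+j}$ for $i \ne j$ and $J\tilde e_{ii} = e_{n+i}$, and similarly $J\tilde f_{ij} = e_i + e_j$ for $i \ne j$ and $J\tilde f_{ii} = e_i$. Consequently $\lambda_{\tilde e_{ij}}(z) = z_{n+i} + z_{n+j}$ (or $z_{n+i}$ when $i=j$) and $\lambda_{\tilde f_{ij}}(z) = z_i + z_j$ (or $z_i$ when $i=j$).

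Then, writing $z = (u,w)^t$ with $u,w \in \C^n$, I would unpack the matrix actions as $\Eu{B}(u,w)^t = (u+Bw,\,w)^t$ and $\El{B}(u,w)^t = (u,\,Bu+w)^t$. Using the identity $\tilde E_{ij} w = (w_i + w_j)(e_i^{(n)} + e_j^{(n)})$ for $i \ne j$ (and $\tilde E_{ii} w = w_i\,e_i^{(n)}$), a direct comparison of the upper and lower $n$-blocks yields $L_{\tilde e_{ij}} = \Eu{-\alpha\tilde E_{ij}}$ and $L_{\tilde f_{ij}} = \El{\alpha\tilde E_{ij}}$, as claimed.

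The only delicate point is the bookkeeping of signs: the factor $(-1)$ in the definition of $\tilde e_{ij}$ combines with the sign in $Je_i = -e_{n+i}$ in such a way that the two negatives cancel in $\lambda_{\tilde e_{ij}}(z)\,\tilde e_{ij}$, leaving a single minus sign that produces the $-\alpha$ in $A_1 = \Eu{-\alpha \tilde E_{ij}}$. Apart from this sign check, everything is a direct linear-algebraic verification.
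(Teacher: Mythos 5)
Your proof is correct and is precisely the direct linear-algebra verification that the paper compresses into the remark that $A_k=(D\Psi_k)_0$ with $f'(0)=\alpha$; you simply spell out the Jacobian computation block by block. The sign bookkeeping at the end checks out (the $(-1)$ in $\tilde e_{ij}$ occurs twice in $\lambda_{\tilde e_{ij}}(z)\,\tilde e_{ij}$ and once more via $Je_i=-e_{n+i}$, leaving a net single minus sign), so the conclusion matches the paper.
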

\begin{proof}
This follows from the fact that $A_k=(D\Psi_k)_0$ and $f'(0)=\alpha$.
\end{proof}
\begin{Rem}\label{germRemark}
Observe that $\Psi_k$ remains a symplectic shear of $A_k$ if we replace $f$ by a holomorphic function $g$ with the same linear part at $0$, i.e.\ if $g(\zeta)=\alpha\zeta + O(|\zeta|^2)$, $\zeta \to 0$.
\end{Rem}

From the chain rule we deduce
\begin{cor}\label{corollary:factorizationLinearPart} Let $A_1,\dots, A_L$ be elementary symplectic matrices and $\Psi_j$ be a symplectic shear of $A_j$, $j=1,\dots,L$. Then
\[ \Psi_L\circ \cdots \circ \Psi_1(z) = A_L\cdots A_2A_1z + O(|z|^2),\quad z\to 0.\]
\end{cor}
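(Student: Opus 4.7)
The plan is a straightforward induction on $L$, using only the chain rule together with the fact that each $\Psi_j$ fixes the origin (so composing them keeps us near $0$, where the linear Taylor expansion is valid). The base case $L=1$ is exactly the definition of a symplectic shear.

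For the inductive step, assume $\Phi_{L-1} := \Psi_{L-1}\circ\cdots\circ\Psi_1$ satisfies
\[
\Phi_{L-1}(z) = A_{L-1}\cdots A_1 z + O(|z|^2),\quad z\to 0.
\]
First I would check that $\Phi_{L-1}(0)=0$: each $\Psi_j$ has the form $z+f_j(\lambda_{v_j}(z))v_j$ with $f_j(0)=0$, so $\Psi_j(0)=0$, and the composition of such maps also fixes $0$. In particular $\Phi_{L-1}(z)\to 0$ as $z\to 0$, and the inductive hypothesis gives the estimate $|\Phi_{L-1}(z)| = O(|z|)$. Applying the expansion $\Psi_L(w)=A_L w + O(|w|^2)$ with $w=\Phi_{L-1}(z)$ and substituting yields
\[
\Psi_L(\Phi_{L-1}(z)) = A_L\bigl(A_{L-1}\cdots A_1 z + O(|z|^2)\bigr) + O(|z|^2) = A_L A_{L-1}\cdots A_1 z + O(|z|^2),
\]
as $z\to 0$, which closes the induction.

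There is no real obstacle here; the only point deserving a moment of care is the substitution step, namely that plugging a $O(|z|)$-quantity into an $O(|w|^2)$-remainder produces an $O(|z|^2)$-remainder, and that the $A_L$ applied to the $O(|z|^2)$ remainder of $\Phi_{L-1}$ remains $O(|z|^2)$ (since $A_L$ is a fixed linear map). Both facts are standard. Thus the corollary reduces to one line of chain rule bookkeeping, and the proof in the paper can legitimately be stated as a one-sentence appeal to the chain rule.
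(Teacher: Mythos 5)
Your proof is correct and matches the paper's approach: the paper simply states ``From the chain rule we deduce'' and gives no further argument, which is exactly the one-line chain-rule bookkeeping you identify. Your inductive write-up just makes that explicit.
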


\section{Interpolation at finitely many points}
The following statement is the heart of the proof of Theorem \ref{theorem1} and in this section we turn to its proof.

Recall that $\Delta=(1,1,\dots,1)^t\in \C^{2n}$.
\begin{prop}\label{theorem:finiteInterpol}
Let $X$ be a Stein space, $k\in \mathbb{N}$, $p,q\in \C^{2n}$ and $P \colon X\to J^k_{p,q}(\C^{2n})$ be a holomorphic family of $k$-jets at $p$ with $P_x(p)=q$ for every $x\in X$. Assume that $P_x$ is $X$-symplectic of order $k$ at $p$ and  that the linear part map $Q \colon X\to \SpC$ of $P$ at $p$ is null-homotopic.
Given finitely many points $a_1,\dots,a_m\subset \SpanD \setminus \{p,q\}$, a natural number $N$, $\epsilon >0$, a compact set $T\subset X$, and a compact convex set $K\subset \C^{2n}\setminus \{p,q\}$, there exists a holomorphic map $F \colon X\to \Symplecto$ satisfying the following conditions:
\begin{enumerate}[label=(\roman*)]
\item $F_x(z) = P_x(z) + O(|z-p|^{k+1})$ for $z\to p$ and for every $x\in X$.
\item $F_x(z) = z + O(|z-a_i|^N)$ for $z\to a_i$, $1\leq i\leq m$, and for every $x\in X$.
\item $|F_x(z)-z|<\epsilon$ for every $x\in T$ and $z\in K$.
\item If $\{c_j\}_{j\in \mathbb{N}} \subset \C^{2n}\setminus (K\cup \{p,q\})$ is a discrete sequence contained in $\SpanD$, then we can ensure that $F_x(c_j)=c_j$ for every $x\in X$ and $j\in \mathbb{N}$.
\end{enumerate}
\end{prop}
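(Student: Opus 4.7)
The plan is to construct $F_x$ as a composition of symplectic shears, organized in three stages: an initial block that sends $p$ to $q$, then a block realizing the linear part of $P_x$ at $p$ via Vaserstein's factorization, and finally an iterative block matching the homogeneous components of order $2,3,\dots,k$. Throughout, the side conditions (i)--(iv) are engineered by the choice of defining function of each shear using Lemma \ref{lemma:magicTool}.

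For Stage 1, I would use one or two symplectic shears $\Phi_0$ to send $p\mapsto q$ while remaining identity to order $N$ at each $a_i$, fixing each $c_j$, and being close to identity on $K$. The defining function of each such shear is produced by Lemma \ref{lemma:magicTool} (after a translation of the variable), which delivers exactly the combination of prescribed Taylor expansion at one point, vanishing to high order at finitely many others, vanishing on a discrete sequence, and smallness on a compact set that we need. If the direction $v=q-p$ produces $\lambda_v$-coincidences between $p$ and some $a_i$ or $c_j$, one routes via an intermediate point through two successive shears with distinct directions. After this stage, replacing $P_x$ by $\Phi_0^{-1}\circ P_x$ reduces to the case $P_x(p)=p$; the linear part is still null-homotopic and the $X$-symplecticity of order $k$ at $p$ is preserved.

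For Stage 2, since the linear part map $Q\colon X\to\SpC$ is null-homotopic, Theorem \ref{Vaserstein} factors it as $Q_x=(A_L)_x\cdots(A_1)_x$ with each $(A_j)_x$ of form (\ref{elemSymp2}). For each $j$ I would construct a symplectic shear $\Psi_j$ whose linear part at $p$ equals $(A_j)_x$ by choosing $v_j\in\{\tilde{e}_{ij},\tilde{f}_{ij}\}$ as in the lemma preceding Remark \ref{germRemark} and designing the defining function $f_j$ with Lemma \ref{lemma:magicTool} so that $f_j(\lambda_{v_j}(p))=0$ and $f_j'(\lambda_{v_j}(p))$ realizes the parameter $\alpha_j(x)$, while $f_j$ vanishes to order $N$ at each $\lambda_{v_j}(a_i)$, vanishes at each $\lambda_{v_j}(c_j)$, and is small on $\lambda_{v_j}(K)$. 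By Corollary \ref{corollary:factorizationLinearPart}, the composition $\Psi_L\circ\cdots\circ\Psi_1$ has linear part $Q_x$ at $p$ and fixes $p$.

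For Stage 3, the residual after Stages 1 and 2 agrees with the identity at $p$ to first order and is $X$-symplectic of order $k$ there. I would then match the $r$-homogeneous part of the residual for $r=2,3,\dots,k$ in succession. Proposition \ref{prop:Hamiltonian} (applied after translating $p$ to $0$) writes this part as $\sum_{j=1}^{\hat{N}} c_j(x)(b_j^{t}J(z-p))^r b_j$, and each summand is the $r$-th-order Taylor term at $p$ of a symplectic shear in direction $b_j$ whose defining function, again supplied by Lemma \ref{lemma:magicTool}, has an appropriate $r$-th order zero at $\lambda_{b_j}(p)$ with prescribed leading coefficient, vanishes to order $N$ at the images of $a_i$, vanishes at the images of $c_j$, and is small on the image of $K$. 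Since each such shear differs from the identity by $O(|z-p|^r)$, the composition does not disturb the already-matched lower orders, the cross terms being of order at least $2r-1\ge r+1$. I expect the main obstacle to be the bookkeeping: each of many shears contributes an error on $K$ that must be kept small enough for the total to stay under $\epsilon$, and one must ensure the scalars $\lambda_{v_j}(p)$, $\lambda_{v_j}(a_i)$, $\lambda_{v_j}(c_j)$ stay pairwise distinct in $\C$ so that Lemma \ref{lemma:magicTool} applies. The former is a standard $\epsilon/M$ budget split, while the latter can be arranged after a preliminary generic symplectic conjugation, since the finite arrangement of $p,q,a_i,c_j$ pairs non-degenerately with the finitely many basis directions $\tilde{e}_{ij},\tilde{f}_{ij}$ for generic choices.
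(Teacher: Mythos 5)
Your proposal follows essentially the same route as the paper: reduce to $p=q=0$ by one or two shears sending $p$ to $q$ (routing through an intermediate point if $\lambda_{q-p}$ is degenerate), realize the linear part by factoring $Q_x$ via Theorem \ref{Vaserstein} and Corollary \ref{corollary:factorizationLinearPart}, then inductively kill the $r$-homogeneous error for $r=2,\dots,k$ using Proposition \ref{prop:Hamiltonian} and order-$r$ shears, with Lemma \ref{lemma:magicTool} supplying the side conditions (ii)--(iv) at every step. The only minor divergence is your ``preliminary generic symplectic conjugation'': the paper sidesteps it because the injectivity of $\lambda_{\tilde e_{ij}},\lambda_{\tilde f_{ij}}$ on $\SpanD$ is automatic (Lemma \ref{lemma:discrete}), and for the $b_j$ of Proposition \ref{prop:Hamiltonian} it is obtained by a small perturbation of the $b_j$'s themselves rather than of the point configuration.
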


\subsection{Proof of Proposition \ref{theorem:finiteInterpol}}
Thanks to Lemma \ref{lemma:symplectomorphism} (see the end of the current section), it is enough to prove the proposition in the special case $p=q=0$.
The strategy of the proof follows the idea of \cite{UgoJetInterpolation}*{Proof of Proposition 2.5}. We shall inductively construct symplectic automorphisms $S_j^x \colon \C^{2n}\to \C^{2n}$ for $j=1,\dots,k$ depending holomorphically on $x\in X$ and satisfying the following properties for every $r\in \{1,\dots,k\}$:
\begin{enumerate}
    \item[($a_r$)] $P_x \circ (S_1^x)^{-1} \circ \cdots \circ (S_r^x)^{-1}(z) = z+O(|z|^{r+1})$ for $z \to 0$
    \item[($b_r$)] $S_r^x \circ \cdots \circ S_1^x(z) = z+O(|z-a_i|^N)$ for $z \to a_i$, $1 \leq i \leq m$
    \item[(iii')] $|S_j^x(z)-z| < \frac{\epsilon}{k+1}$, for every $x\in T$, $z\in K$ and $j=1,\dots, k$,
    \item[(iv')]  If $\{c_j\}_{j\in \mathbb{N}} \subset \C^{2n}\setminus (K\cup \{p,q\})$ is a discrete sequence contained in $\SpanD$, then we can ensure that $S_j^x(c_i)=c_i$ for every $x\in X$, $i\in \mathbb{N}$ and $j=1,\dots, k$.

\end{enumerate}
Taking 
\[F_x(z):=S_k^x\circ \cdots \circ S_1^x(z),\quad x\in X\]
will furnish a holomorphic map $F \colon X\to \Symplecto$ satisfying conditions (i)--(iv).

\subsubsection{Base case: linear part}
In this section, we construct the map $S_1^x$. Before we can do that, we need some useful terms. Observe that if we look at the linear part of the jet, we have
\[ P_x(z) =  Q_xz +  O(|z|^2),\quad z\to 0.\]
So we need $S_1^x(z) = Q_xz +  O(|z|^2)$ as $z \to 0$ for ($a_1$) to be satisfied. Since $Q \colon X\to \SpC$ is a null-homotopic map by assumption, it factorizes into a finite product of elementary factors by Theorem \ref{Vaserstein},
i.e.\ $Q_x = A_L \cdots A_2A_1$ where each factor $A_l$ is an elementary symplectic matrix of the form
\[  \Eu{ -\alpha_x\tilde{E}_{ij}}\quad \text{or} \quad \El{\alpha_x\tilde{E}_{ij}},\quad \]
for some holomorphic function $\alpha \colon X\to \C$ and some $1\leq i\leq j\leq n.$ For each $A_l$, $l=1,\dots,L$, choose a symplectic shear $\Psi_l$ of $A_l$. By Corollary \ref{corollary:factorizationLinearPart}, we have
\[ \Psi_L\circ \cdots \circ \Psi_1(z) = Q_xz + O(|z|^2),\quad z\to 0,\]
hence property ($a_1$) is satsified. Note that each $\Psi_i$, $i=1,\dots,L$, is of the form
\begin{align}\label{sympshear}\tag{$\star$}
\Psi_i(z) = z + f_{i}(x,\lambda_v(z))v
\end{align}
for some $v\in \C^{2n}$ and some holomorphic function $f_i \colon X\times \C\to \C$. Write $f_i^x(z):=f_i(x,z)$. By Remark \ref{germRemark}, property ($a_1$) remains satisfied, as long as we keep the linear part of $f_i^x$ at the origin. Since the origin is neither in $\{a_1,\dots,a_m\}$ nor in $K$ by assumption, we can adjust $f$ in small enough neighborhoods around $a_i$, $i=1,\dots,m$, and $K$ so that property ($a_1$) remains valid.

We want to apply Lemma \ref{lemma:magicTool} in order to impose more conditions on $f_i$. Thanks to the following lemma we will be able to do this.
\begin{lemma}\label{lemma:discrete}
Let $\{c_l\}_{l\in \mathbb{N}}$ be a discrete sequence contained in $\SpanD$ and let $1\leq i\leq j\leq n$. 
Then $\{c_l\}_{l\in \mathbb{N}}$ is mapped injectively by $\lambda_{\tilde{e}_{ij}} \colon \C^{2n}\to \C$ and $\lambda_{\tilde{f}_{ij}} \colon \C^{2n}\to \C$. Moreover, the images $\{\lambda_{\tilde{e}_{ij}}(c_l)\}_{l\in \mathbb{N}}$ and $\{\lambda_{\tilde{f}_{ij}}(c_l)\}_{l\in \mathbb{N}}$ are discrete.
\end{lemma}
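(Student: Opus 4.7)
The plan is to reduce everything to a direct linear-algebra computation exploiting the fact that $c_l \in \SpanD$ is a one-parameter family. Since each $c_l$ lies in $\SpanD$, I would first write $c_l = \gamma_l \Delta$ for uniquely determined scalars $\gamma_l \in \C$. The hypothesis that $\{c_l\}$ is discrete without repetition translates directly into the statement that $\{\gamma_l\} \subset \C$ is discrete and has no repeated entries. Under $\lambda_v$ the sequence becomes
\[
\lambda_v(c_l) \;=\; c_l^{t} J v \;=\; \gamma_l \cdot (\Delta^{t} J v),
\]
so as long as the scalar $\Delta^{t} J v$ is nonzero for $v = \tilde{e}_{ij}$ and $v = \tilde{f}_{ij}$, the map $\gamma_l \mapsto \lambda_v(c_l)$ is multiplication by a nonzero constant, which preserves both injectivity and discreteness.

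Next I would explicitly compute $\Delta^{t} J$. Using the block form of $J$, one sees immediately that $\Delta^{t} J$ is the row vector whose first $n$ entries equal $-1$ and whose last $n$ entries equal $+1$. Since $\tilde{e}_{ij}$ is supported on the first $n$ coordinates with total sum $-1$ (case $i=j$) or $-2$ (case $i \neq j$), pairing gives $\Delta^{t} J \tilde{e}_{ij} \in \{1, 2\}$. Symmetrically, $\tilde{f}_{ij}$ is supported on the last $n$ coordinates with total sum $+1$ or $+2$, giving $\Delta^{t} J \tilde{f}_{ij} \in \{1, 2\}$. In every case the scalar is nonzero, and this is the only thing that needs to be checked.

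The conclusion is then formal: on the line $\SpanD$, each of the maps $\lambda_{\tilde{e}_{ij}}$ and $\lambda_{\tilde{f}_{ij}}$ acts as a nonzero scalar in the $\gamma$-coordinate, so the images $\{\lambda_{\tilde{e}_{ij}}(c_l)\}$ and $\{\lambda_{\tilde{f}_{ij}}(c_l)\}$ are obtained from the discrete set $\{\gamma_l\}$ by multiplication by a nonzero constant, and are therefore both injective reparametrisations and discrete in $\C$. There is no genuine obstacle here; the only thing one must verify is that neither $\tilde{e}_{ij}$ nor $\tilde{f}_{ij}$ is orthogonal (with respect to the pairing $\langle\,\cdot\,, J\,\cdot\,\rangle$) to $\Delta$, and this is precisely why the $(-1)$ in the definition of $\tilde{e}_{ij}$ and the explicit sign choices make no difference, as $\Delta^{t} J v \neq 0$ in all four cases.
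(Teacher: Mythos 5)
Your proof is correct and follows essentially the same route as the paper: both reduce to the observation that $\lambda_{\tilde{e}_{ij}}$ and $\lambda_{\tilde{f}_{ij}}$ restricted to $\SpanD$ act as multiplication by a nonzero constant (namely $1$ or $2$) in the $\gamma$-coordinate. The paper states this computation directly as $\lambda_{\tilde{e}_{ij}}(\alpha\Delta)=\lambda_{\tilde{f}_{ij}}(\alpha\Delta)=2\alpha$ (resp.\ $\alpha$); you spell out the same scalar via $\Delta^t J v$ and then draw the formal conclusion.
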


\begin{proof}
Recall that $\lambda_{v}(z)=z^tJv$ for $v=\tilde{e}_{ij},\tilde{f}_{ij}$. Then we have \[\lambda_{\tilde{e}_{ij}}(\alpha \Delta)=\lambda_{\tilde{f}_{ij}}(\alpha\Delta)=  \begin{cases}
2\alpha  &i\neq j\\ \alpha &i=j \end{cases},\quad 1\leq i\leq j\leq n,\]
for all $\alpha\in \C$.
\end{proof}

We define
\[ K^\epsilon:=\{z\in \C^{2n}: \inf_{w\in K} |z-w| \leq \epsilon\}\]
which is convex and compact, since $K$ is convex and compact. By Lemma \ref{lemma:discrete}, the set $\{\lambda_v(c_j)\}_{j\in \mathbb{N}}\subset \C$ is discrete (note that $\lambda_v$ is the linear map from (\ref{sympshear})). According to Lemma \ref{lemma:magicTool}, $f_i$ can be chosen such that, in addition to property
\begin{align}\label{equation:linearpart}
f_i^x(\zeta) = \alpha_x\zeta + O(|\zeta|^2),\quad \zeta\to 0, \, x\in X,
\end{align}
also the following properties are satisfied:
\begin{align}
&|f_i|_{T\times \lambda_v(K^\epsilon)}<\frac{\epsilon}{|v|(k+1)L},\label{inequation:approx}\\
&f_i^{x}(\zeta)=O(|\zeta - \lambda_{v}(a_j)|^N), \quad \zeta\to \lambda_{v}(a_j),\quad 1\leq j\leq m,\quad x\in X,\label{zeros:order}\\
&f_i^{x}(\lambda_v(c_j))=0, \quad j\in \mathbb{N},\quad x\in X.\label{equation:discreteset}
\end{align}

Define $S_1^x := \Psi_L\circ \cdots \circ \Psi_1$. We already know that $S_1^x$ satisfies property ($a_1$). Properties ($b_1$) and (iv') are satisfied because of (\ref{zeros:order}) and (\ref{equation:discreteset}), respectively. Moreover, this choice of $S_1^x$ satisfies property (iii') i.e.\
\[  |S_1^x(z)-z|<\frac{\epsilon}{k+1},\quad x\in T, z\in K,
\]
by (\ref{inequation:approx}).
\subsubsection{Induction step}

For the induction step, we assume that for some $r \in \{2,\dots,k\}$ we have already found maps  $S_1,S_2,\dots, S_{r-1}$ such that conditions (iii'), (iv') and
\begin{enumerate}
    \item[($a_{r-1}$)] $P_x^j \circ (S_1^x)^{-1}\circ \cdots \circ (S_{r-1}^x)^{-1} = z + O(|z|^r)$ for $z \to 0$
    \item[($b_{r-1}$)] $S_{r-1}^x \circ \cdots \circ S_1^x(z) = z + O(|z-a_i|^N)$ for $z \to a_i, \, 1 \leq i \leq m$
\end{enumerate}
are satisfied. Then
\[ \tilde{P}_x:= P_x\circ (S_1^x)^{-1}\circ \cdots \circ (S_{r-1}^x)^{-1}(z) = z + P_x^r + O(|z|^{r+1}), \, z\to 0,\]
where $P_x^r$ is a homogeneous polynomial of order $r$ on $\C^{2n}$ depending holomorphically on $x\in X$. Moreover, we may interpret $P_x^r$ as a homogeneous vector field on $\C^{2n}$. Observe that $\tilde{P}$ is $X$-symplectic of order $r$ at the origin, since $P$ is $X$-symplectic of order $r$ at the origin and $(S_1^x)^{-1}\circ \cdots \circ (S_{r-1}^x)^{-1}$ is an automorphism fixing the origin. 

Proposition \ref{prop:Hamiltonian} implies the existence of vectors
$b_1,\dots,b_{\hat{N}}\in \C^{2n}$ and holomorphic functions $c_1,\dots,c_{\hat{N}} \colon X\to \C$ such that
\[ P_x^r(z) = \sum_{j=1}^{\hat{N}} c_j(x) (b_j^TJz)^r b_j.\]

Note that the vectors $b_1,\dots,b_{\hat{N}}$ can be perturbed slightly and still have the property that \[{(b_1^TJz)^r,\dots,(b_{\hat{N}}^TJz)^r}\] forms a basis of the vector space of $r$-homogeneous polynomials in $2n$ variables. We may therefore assume that the image of the discrete set
\[ \{0\}\cup \{a_1\}\cup \cdots \{a_m\}\cup \bigcup_{i\geq 1} \{c_i\}\]
is again discrete under the map $\lambda_{b_j}(z)^r$ and without repetition, since all points of that set are contained in $\SpanD$. To simplify notation, we write $\lambda_j$ instead of $\lambda_{b_j}$. By Lemma \ref{lemma:magicTool} there exist holomorphic functions $g_j \colon X\times \C\to \C$ with
\begin{enumerate} [label=\arabic*)]
    \item $|g_j|_{T\times \lambda_j(K^\epsilon)^r}< \epsilon / (\hat{N}(k+1))$
    \item For $\zeta \to 0$, $x\in X$ we have $g_{j}^x(\zeta)=c_x\zeta^r + O(|\zeta|^{r+1})$ 
    \item For $\zeta \to \lambda_j(a_i)$, $1 \leq i \leq m$, $x \in X$ we have $g_{j}^x(\zeta) = O(|\zeta-\lambda(a_i)|^N)$
    \item For $i>0$ we have $g_{j}^x(c_i)=c_i$ 
\end{enumerate}
Then we define the mappings
\[ G_j^x(z) = z + g_{j}^x(\lambda_{b_j}(z)^r)b_j,\quad j=1,\dots,\hat{N}.\]
Any composition of those $G_j^x$'s yields the desired symplectic automorphism $S_r^x$. And this finishes the induction step.

After finitely many steps we find symplectic automorphisms $S_0^x,\dots,S_k^x$ such that conditions ($a_k$) and ($b_k$) are satisfied. Then $F_x(z):=S_k^x\circ \cdots \circ S_0^x(z)$ furnishes a holomorphic map $F \colon X\to \Symplecto$ satisfying the required conditions. This proves the proposition for $p=q=0$.

In order to finish the proof, it remains to show
\begin{lemma}\label{lemma:symplectomorphism}
Let $p,q\in \C^{2n}, p\neq q,$ be two different points, $K\subset \C^{2n}\setminus \{p,q\}$ a convex, compact set and $\{a_j\}_{j=1}^m\subset \C^{2n}\setminus (K\cup \{p,q\})$ a finite set of points. Let $\{c_j\}_{j\in \mathbb{N}}\subset \SpanD\setminus \{p,q,a_1,\dots, a_m\}$ be a discrete set, $\epsilon>0$ and $N\in \mathbb{N}$. Then there exists a symplectic automorphism $F \colon \C^{2n}\to\C^{2n}$ such that
\begin{enumerate}[label=(\roman*)]
    \item $F(p)=q$
    \item $F(z)=z+O(|z-a_j|^N), \, z\to a_j$,
    \item $|F(z)-z|_K<\epsilon$,
    \item $F(c_j)=c_j$, for $j\in \mathbb{N}$.
\end{enumerate}
\end{lemma}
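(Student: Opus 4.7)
The plan is to build $F$ as a finite composition of symplectic shears $\Psi_l(z) = z + f_l(\lambda_{v_l}(z))v_l$, $l = 1,\dots,L$. First I would choose vectors $v_1,\dots,v_L \in \C^{2n}$ that span $\C^{2n}$ with some redundancy (say $L = 2n+1$) and satisfy $\Delta^t J v_l \neq 0$, so that, exactly as in Lemma \ref{lemma:discrete}, each $\lambda_{v_l}$ sends the discrete sequence $\{c_j\} \subset \SpanD$ injectively to a discrete sequence in $\C$. The redundancy leaves a one-parameter family of decompositions $q - p = \sum_{l=1}^L \beta_l v_l$. Setting $p_0 := p$ and $p_l := p_{l-1}+\beta_l v_l$, one has $p_L = q$, and the $l$-th shear should transport $p_{l-1}$ to $p_l$ while fixing every $c_j$, being tangent to the identity to order $N$ at every $a_j$, and moving points of $K$ by at most $\epsilon/L$.

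Next, I would use the freedom in the decomposition (and, if needed, a small perturbation of the basis) to enforce the following \emph{genericity conditions}: for every $l$ the scalar $\mu_l := \lambda_{v_l}(p_{l-1})$ differs from every $\lambda_{v_l}(a_j)$ and every $\lambda_{v_l}(c_j)$, and it does not belong to the convex compact set $\lambda_{v_l}(K^{\epsilon/L})$, where $K^{\epsilon/L}$ denotes an $\epsilon/L$-neighborhood of $K$. Each coincidence $\mu_l = \lambda_{v_l}(a_j)$ or $\mu_l = \lambda_{v_l}(c_j)$ is a single linear equation on the decomposition parameter (non-trivial after a generic choice of the $v_l$), cutting out a proper analytic subset; the open condition $p_{l-1} \in K^{\epsilon/L}$ is bounded and is defeated by letting the parameter tend to infinity. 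A generic choice then meets all conditions simultaneously.

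Having separated each $\mu_l$ from the convex compact $\lambda_{v_l}(K^{\epsilon/L})$ and from the discrete interpolation data in $\C$, I would invoke a Mittag--Leffler / osculation argument --- concretely, Lemma \ref{lemma:magicTool} with $X$ a point and $r = 0$, applied after the affine shift $\zeta \mapsto \zeta - \mu_l$ --- to produce a holomorphic $f_l \colon \C \to \C$ satisfying $f_l(\mu_l) = \beta_l$, $f_l(\zeta) = O(|\zeta - \lambda_{v_l}(a_j)|^N)$ as $\zeta \to \lambda_{v_l}(a_j)$, $f_l(\lambda_{v_l}(c_j))=0$ for every $j$, and $|f_l|_{\lambda_{v_l}(K^{\epsilon/L})} < \epsilon/(L\,|v_l|)$. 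Then $\Psi_l(z) := z + f_l(\lambda_{v_l}(z))v_l$ is a symplectic shear satisfying all single-step requirements.

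Setting $F := \Psi_L \circ \cdots \circ \Psi_1$ will close the argument: condition (i) follows from $\Psi_l(p_{l-1}) = p_l$; condition (iv) from $\Psi_l(c_j) = c_j$; condition (ii) because tangency to the identity to order $N$ at a point is preserved under composition by the chain rule; and condition (iii) from a telescoping estimate combined with the inclusions $\Psi_l(K^{(l-1)\epsilon/L}) \subseteq K^{l\epsilon/L}$. The main obstacle to anticipate is the genericity step --- especially keeping the intermediate points $p_{l-1}$ outside the open thickened compact $K^{\epsilon/L}$ --- which is precisely what forces us to work with $L > 2n$ shears and to push the decomposition parameter to infinity.
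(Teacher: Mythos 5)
Your construction by a chain of $L=2n+1$ shears differs from the paper's, which routes $p$ to $q$ with a \emph{single} shear $F(z)=z+f(\lambda_v(z))v$, $v=q-p$, when the one-variable discrete set $\{\lambda_v(p)\}\cup\{\lambda_v(a_j)\}\cup\{\lambda_v(c_j)\}$ is without repetition, and otherwise with \emph{two} shears through one intermediate point $r$ chosen generically. Having a full $2n$-dimensional family of candidates for $r$ (rather than one parameter $t$) gives the paper considerably more room to enforce the genericity conditions, and the overall proof is leaner.

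More importantly, your genericity step has a genuine gap, concentrated at $l=1$. The condition that the osculation lemma actually needs is
\[
\mu_l=\lambda_{v_l}(p_{l-1})\notin \lambda_{v_l}(K^{\epsilon/L}),
\]
because $f_l$ must take the (a priori not small) value $\beta_l$ at $\mu_l$ while being uniformly $\epsilon/(L|v_l|)$-small on the convex compact $\lambda_{v_l}(K^{\epsilon/L})\subset\C$. You write that "$p_{l-1}\in K^{\epsilon/L}$ \dots is defeated by letting the parameter tend to infinity," but this conflates two different statements: $p_{l-1}\notin K^{\epsilon/L}$ is weaker than $\lambda_{v_l}(p_{l-1})\notin\lambda_{v_l}(K^{\epsilon/L})$ (the complex-linear image of a point outside a convex set can lie inside the image of that set), so pushing $p_{l-1}$ out of $K^{\epsilon/L}$ is not enough. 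For $l\ge 2$ one can repair this, since $\lambda_{v_l}(p_{l-1})=\lambda_{v_l}(p)+t\sum_{i<l}\gamma_i\lambda_{v_l}(v_i)$ does go to infinity with $|t|$ as long as the coefficient $\sum_{i<l}\gamma_i\lambda_{v_l}(v_i)$ is nonzero, which is a genuine genericity condition on the $v_l$. But for $l=1$ the relevant point is $p_0=p$, which does not move with $t$ at all; $\lambda_{v_1}(p)$ is completely determined by the choice of $v_1$, and a small perturbation of $v_1$ cannot push $\lambda_{v_1}(p)$ out of the compact set $\lambda_{v_1}(K^{\epsilon/L})$ if it lies in its interior. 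To handle the first shear you need a non-generic choice of $v_1$: since $p\notin K^{\epsilon/L}$ (after shrinking $\epsilon$ if necessary) and $K^{\epsilon/L}$ is convex and compact, Hahn--Banach produces a complex-linear functional, hence a vector $v_1$ with $\lambda_{v_1}=\ell$ (because $J$ is invertible), such that a real hyperplane of the form $\{\mathrm{Re}\,(e^{i\theta}\lambda_{v_1})=\mathrm{const}\}$ separates $p$ from $K^{\epsilon/L}$, which gives exactly $\lambda_{v_1}(p)\notin\lambda_{v_1}(K^{\epsilon/L})$; one then perturbs this $v_1$ slightly to additionally guarantee $\Delta^tJv_1\neq 0$ and the countably many avoidance conditions $\lambda_{v_1}(p)\neq\lambda_{v_1}(a_j)$, $\lambda_{v_1}(p)\neq\lambda_{v_1}(c_j)$. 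Without such a separation argument for the first step, the proposal does not close.
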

\begin{proof}
Let $v:=q-p$ and $\lambda(z):=z^TJv$. We consider two different cases. At first, we assume that
\begin{equation}\label{equation:discrete}
\{\lambda(p)\}\cup \{\lambda(a_j)\}_{j=1}^m\cup \{\lambda(c_j)\}_{j\in \mathbb{N}}
\end{equation}
is a discrete sequence without repetition. An application of Mittag-Leffler's osculation theorem implies the existence of a holomorphic function $f \colon \C\to\C$ with the properties
\begin{enumerate} [label=\arabic*)]
    \item $f(\lambda(p))=1$
    \item $f(\lambda(z))=O(|z-a_j|^N),\, z \to a_j, \, j=1,\dots,m$
    \item $\sup_{z\in \lambda(K)}|f(z)|< \epsilon /|v| $
    \item $f(\lambda(c_j))=0$
\end{enumerate}
and then the mapping $F \colon \C^{2n}\to \C^{2n}$ given by
\[ F(z):=z + f(\lambda(z))v,\]
is the desired symplectic automorphism.
This proves the lemma in the case when (\ref{equation:discrete}) is satisfied.
 
Now consider the case where
\begin{align*}
\{\lambda(p)\}\cup \{\lambda(a_j)\}_{j=1}^m\cup \{\lambda(c_j)\}_{j\in \mathbb{N}}
\end{align*}
is not a discrete set without repetition. Equivalently, since this set is actually discrete, the sequence \[\{{p,a_1,\dots,a_m,c_1,c_2,\dots\}}\] is not mapped injectively to $\C$ by $\lambda_v$. This is exactly then the case if $\lambda(\Delta)=\Delta^tJv=0$ or $\lambda(p)=p^tJq\in \{\lambda(a_j)\}_{j=1}^m\cup \{\lambda(c_j)\}_{j\in \mathbb{N}}$. For an arbitrary point $r\in \C^{2n}$ we write $v_1:=r-p$ and $v_2:=q-r$. We can choose $r$ such that
\[\{\lambda_{v_i}(p)\}\cup \{\lambda_{v_i}(a_j)\}_{j=1}^m\cup \{\lambda_{v_i}(c_j)\}_{j\in \mathbb{N}},\quad i=1,2,\]
are discrete sequences without repetition. To see this, we choose $r$ such that
\begin{align}\label{condition1}
v_i^TJ\Delta \neq 0\quad i=1,2,
\end{align}
which implies that $E_i:=\{\lambda_{v_i}(a_j)\}_{j=1}^m\cup\{\lambda_{v_i}(c_j)\}_{j\in \mathbb{N}}$ is discrete without repetition. If necessary, we perturb $r$ slightly such that (\ref{condition1}) remains valid but such that $\lambda_{v_i}(p)\in\C\setminus E_i$.
We now apply the lemma to obtain a symplectic automorphism $F_1$ satisfying properties (ii), (iii) with $\epsilon/2$, (iv) and $F_1(p)=r$. Let $K'$ denote the closure of the $\frac{\epsilon}{2}$-neighborhood of $K$. Then $F_1(K)\subset K'$. We apply the lemma once more, this time with $K'$ instead of $K$, to obtain a symplectic automorphism $F_2$ satisfying properties (ii), (iii) with $\epsilon/2$, (iv) and $F_2(r)=q$. Then the composition $F:=F_1\circ F_2$ is the desired symplectic automorphism and this finishes the proof. 
\end{proof}

%
%
\section{Interpolation at infinitely many points: Proof of Theorem \ref{theorem1} }
As both sequences $a_j$ and $b_j$ are symplectically tame, we can adjust the base points of the family of jets and assume that $a_j=b_j=(j,\dots,j)=j\Delta$, $j\in \mathbb{N}$. Furthermore, we only need to prove this result at a discrete sequence $\{c_j\}_{j\in \mathbb{N}}$ of points contained in $\SpanD$, as for any such sequence there exists a symplectic automorphism $\Phi\in \Symplecto$ such that
\[\Phi(z) = j\Delta + (z-c_j) + O(|z-c_j|^{m_j+1}),\quad z\to c_j,\quad j\in \mathbb{N},\]
by Proposition \ref{prop:symptame}.

Fix an exhausting sequence of compacts $T_1\subset T_2\subset \cdots \subset \cup_{j=1}^\infty T_j=X$ and a sequence of positive real numbers $\{\epsilon_j\}_{j\in \mathbb{N}}$, such that $\sum_{j=1}^\infty \epsilon_j <+\infty$. We will inductively construct the following:
\begin{enumerate}[label=(\alph*)]
\item a discrete sequence of points $\{\alpha_j\}_{j\in \mathbb{N}}\subset \mathbb{N}$,
\item an exhausting sequence of convex compacts $K_1\subset K_2\subset \cdots \subset \cup_{j=1}^\infty K_j=\C^{2n}$ such that $\mathrm{dist}(K_{j-1},\C^{2n}\setminus K_j) > \epsilon_j$ and $\alpha_j\Delta \not\in K_j$ for all $j>0$,
\item a sequence of holomorphic maps $\Psi_j \colon X\to\Symplecto$ for $j\in \mathbb{N}$, such that for $F_x^k:=\Psi_k^x\circ\cdots \circ \Psi_1^x\in \Symplecto$, $x\in X$, $k\in \mathbb{N}$, we have that
\begin{enumerate}
\item[($i_k$)] $F_x^k(z)=\tau^*(P_x^j(z))+O(|z-\alpha_j\Delta|^{m_j+1})$ for $z\to \alpha_j\Delta$ and each $j=1,\dots,k$, where $\tau$ is the translation mapping $j$ to $\alpha_j$ for all $j\in \mathbb{N}$.
\item[($ii_k$)] $F_x^k(i\Delta)=i\Delta$ for every $i> \alpha_k$,
\item[($iii_k$)] $\Psi_j^k$ is $\epsilon_j$-close to the identity on $K_j$ for every $x\in T_j$ and $j\in \mathbb{N}$.
\end{enumerate}
\end{enumerate}

For the base case of the induction, let $K_1=\mathbb{B}$ be the unit ball in $\C^{2n}$, and $\alpha_1=2$. By Proposition \ref{theorem:finiteInterpol} we can pick a family of symplectic automorphisms $\Psi_1^x\in \Symplecto$, $x\in X$, such that properties ($i_1$), ($ii_1$) and ($iii_1$) are satisfied.

For the induction step, suppose we have constructed the objects in (a), (b) and (c) satisfying properties ($i_j$), ($ii_j$) and ($iii_j$) for $j=1,\dots,k$. Pick a compact convex set $K_{k+1}\subset \C^{2n}$ such that 
\begin{align}\label{condition1b}
(\alpha_k+1)\mathbb{B}\cup F_x^k((\alpha_k+1)\mathbb{B})\subset K_{k+1},\quad x\in T_{k+1},
\end{align}
 and \[\mathrm{dist}(K_k,\C^{2n}\setminus K_{k+1}) > \epsilon_{k+1}.\]
Choose $\alpha_{k+1} \in \mathbb{N}$ such that $\alpha_{k+1}\Delta \not\in K_{k+1}$. We again invoke Proposition \ref{theorem:finiteInterpol} to obtain a holomorphic map $\Psi_{k+1} \colon X\to\Symplecto$ with the following properties:
\begin{enumerate}[label=\arabic*)]
    \item $\Psi_{k+1}^x(z)= z + O(|z-\alpha_j\Delta|^N)$, $z\to \alpha_j\Delta$ for every $j=1,\dots,k$, where the integer $N>m_j$ for every $j<k+1$,
    \item $\Psi_{k+1}^x(z)= P_x^{k+1}\circ (F_k^x)^{-1}(z) + O(|z-\alpha_{k+1}\Delta|^{m_{k+1}+1})$ as $z\to \alpha_{k+1}\Delta$
    \item $\Psi_{k+1}^x$ is $\epsilon_{k+1}$-close to the identity on $K_{k+1}$ for every $x\in T_{k+1}$
    \item $\Psi_{k+1}^x(j\Delta)=j\Delta$ for every $j>\alpha_{k+1}$.
\end{enumerate}

We then see that the holomorphic family of symplectic automorphisms defined by 
\[F_x^{k+1}=\Psi_{k+1}^x\circ F_x^k\in \Symplecto,\quad x\in X,\]
satisfies properties ($i_{k+1}$), ($ii_{k+1}$) and ($iii_{k+1}$), so the induction may proceed.

The sequence of compacts $K_j\subset \C^{2n}$, $j\in \mathbb{N}$, constructed in this way clearly satisfies condition (b). By (\ref{condition1b}) we can apply \cite{KutRamPeon}*{Lemma 4.1}, which yields that the sequence $\{F^k\}_{k\in \mathbb{N}}$ converges to a holomorphic family of automorphisms $F \colon X\to \mathrm{Aut}(\C^{2n})$ which interpolates the given families of jets $\tau^*(P_x^j)$ at the points $\alpha_j\Delta$, $j\in \mathbb{N}$, thanks to property (i). Moreover, we have
\[F^*\omega = \lim_{k\to\infty} (F^k)^*\omega = \omega,\]
which shows $F\in \Symplecto$. This finishes the proof of Theorem \ref{theorem1}.
\newpage

\section{Symplectically tame sets}
In this section, we prove some extensions of the results of Rosay and Rudin for tame sets to the symplectic holomorphic setting. In Subsection \ref{subsec-unavoidable} we construct an unavoidable set for symplectic holomorphic maps fixing the origin.

\smallskip

The following well-known generalization of Mittag-Leffler's theorem can be proved using the coherence of the powers of the ideal sheaf $\mathcal{I}_A$ for an analytic subset $A$ of a Stein space $X$ and Cartan's Theorem B.
\begin{prop}\label{sympl:question}
Given a discrete sequence $\{w_k\}_{k\in \mathbb{N}}\subset \C^n$ without repetition and $\{z_k\}_{k\in \mathbb{N}}\subset \C^n$, there exists a holomorphic function $f \colon \C^n\to \C$ with $\nabla f(w_k)=z_k.$
\end{prop}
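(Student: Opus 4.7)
The plan is to apply Cartan's Theorem B to a suitable coherent sheaf on $\C^n$, exactly as the preamble suggests. The condition $\nabla f(w_k)=z_k$ is a condition on the $1$-jet of $f$ at $w_k$, so the natural object to consider is the sheaf $\mathcal{O}_{\C^n}/\mathcal{I}_A^{\,2}$, where $A=\{w_k\}_{k\in\mathbb{N}}\subset\C^n$ and $\mathcal{I}_A$ denotes its ideal sheaf.

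First I would note that $A$, having no limit point in $\C^n$, is a zero-dimensional closed analytic subset, so $\mathcal{I}_A$ is coherent and hence so is $\mathcal{I}_A^{\,2}$. The short exact sequence
\[ 0 \longrightarrow \mathcal{I}_A^{\,2} \longrightarrow \mathcal{O}_{\C^n} \longrightarrow \mathcal{O}_{\C^n}/\mathcal{I}_A^{\,2} \longrightarrow 0 \]
therefore has a coherent kernel. Since $\C^n$ is Stein, Cartan's Theorem B gives $H^1(\C^n,\mathcal{I}_A^{\,2})=0$, and the induced long exact sequence yields surjectivity of the restriction map
\[ H^0(\C^n,\mathcal{O}_{\C^n}) \longrightarrow H^0(\C^n,\mathcal{O}_{\C^n}/\mathcal{I}_A^{\,2}). \]

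Next I would construct a global section of $\mathcal{O}_{\C^n}/\mathcal{I}_A^{\,2}$ carrying the prescribed gradient data. Because $A$ is discrete, the stalks of $\mathcal{O}_{\C^n}/\mathcal{I}_A^{\,2}$ are zero off $A$ and equal to $\mathcal{O}_{\C^n,w_k}/\mathfrak{m}_{w_k}^{\,2}$ at each $w_k$. So a global section is nothing but a choice of $1$-jet at each $w_k$, independently. Define such a section $\sigma$ by letting its value at $w_k$ be the class of the affine function $\ell_k(z):=\langle z_k,\,z-w_k\rangle$ modulo $\mathfrak{m}_{w_k}^{\,2}$. Pulling $\sigma$ back to a global holomorphic function $f\in\mathcal{O}(\C^n)$ via the surjection above, we have $f-\ell_k\in\mathfrak{m}_{w_k}^{\,2}$ for every $k$, which forces $\nabla f(w_k)=\nabla\ell_k(w_k)=z_k$, as required.

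There is no real obstacle here; the statement is essentially a template application of Theorem B in the style of the Mittag-Leffler and Weierstrass interpolation theorems, with $\mathcal{I}_A^{\,2}$ replacing $\mathcal{I}_A$ to upgrade from value interpolation to $1$-jet interpolation. The only thing to double-check is the coherence of $\mathcal{I}_A^{\,2}$, which follows from the coherence of $\mathcal{I}_A$ (as a finitely generated ideal in a coherent sheaf of rings) and is why the problem is framed in terms of ``powers of the ideal sheaf.''
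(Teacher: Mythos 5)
Your proof is correct and takes essentially the same route the paper sketches in its commented-out proof: define the affine functions $f_k(z)=z_k^t(z-w_k)$ and apply Cartan's Theorem B (via the coherence of $\mathcal{I}_A^{\,2}$ and the vanishing $H^1(\C^n,\mathcal{I}_A^{\,2})=0$) to produce $f$ with $f-f_k=O(|z-w_k|^2)$ at each $w_k$. You have simply spelled out the standard details that the paper leaves implicit; there is no gap.
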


A linear subspace $L \subset \CC{2n}$ is called \textit{Lagrangian}, if $L=L^{\bot}$, where
\[ L^{\bot} = \{z\in \CC{2n}: \omega(z,w)=0,\ \forall w\in L\}\]
is the \textit{symplectic complement} of $L$.
\begin{theorem} \label{langrage}
Let $E=\{c_1,c_2,\dots\} \subset \C^{2n}$ be a discrete sequence, $L$ a Lagrangian subspace, and $\pi$ the projection $\C^{2n} \to L$.
\begin{enumerate}
\item[(a)] If $\pi$ is injective and the image $\pi(E)$ is discrete, then $E$ is symplectically tame.
\item[(b)] If $\pi$ has finite fibers and the image $\pi(E)$ is discrete, then $E$ is symplectically tame.
\end{enumerate}
\end{theorem}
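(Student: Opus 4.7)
The plan is to prove both parts by composing symplectic shears of two basic types. After a linear symplectic change of coordinates, I may assume that $L = \C^n \times \{0\}$ and its Lagrangian complement $\{0\} \times \C^n$ are standard, so writing $(w,v) \in \C^n \times \C^n$ the projection becomes $\pi(w,v) = w$. For any scalar holomorphic $g, k \colon \C^n \to \C$, the shears
\[ (w,v) \mapsto (w, v + \nabla g(w)), \qquad (w,v) \mapsto (w + \nabla k(v), v) \]
are symplectic automorphisms (the Hessians are symmetric), and Proposition \ref{sympl:question} allows me to prescribe $\nabla g$ freely on any discrete sequence in $\C^n$ without repetition. Let $\mathbf{1} := (1, \dots, 1) \in \C^n$, so that $\Delta = (\mathbf{1}, \mathbf{1}) \in \C^{2n}$.

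For part (a), write $c_j = (w_j, v_j)$; the hypothesis gives $\{w_j\}_{j \in \N}$ discrete without repetition in $\C^n$. I apply Proposition \ref{sympl:question} three times to obtain $g_1, g_2, g_3 \colon \C^n \to \C$ with
\[ \nabla g_1(w_j) = v_j, \qquad \nabla g_2(w_j) = j\mathbf{1}, \qquad \nabla g_3(j\mathbf{1}) = j\mathbf{1} - w_j, \]
the third interpolation being legitimate because $\{j\mathbf{1}\}_{j \in \N}$ is itself discrete without repetition. Setting $\Phi_1(w,v) := (w, v - \nabla g_1(w))$, $\Phi_2(w,v) := (w, v + \nabla g_2(w))$, $\Phi_3(w,v) := (w + \nabla g_3(v), v)$, the composition $F := \Phi_3 \circ \Phi_2 \circ \Phi_1 \in \Symplecto$ moves $c_j = (w_j, v_j) \mapsto (w_j, 0) \mapsto (w_j, j\mathbf{1}) \mapsto (j\mathbf{1}, j\mathbf{1}) = j\Delta$, which proves symplectic tameness.

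For part (b), I will reduce to (a) by two preliminary fiber-separating shears. Enumerate $\pi(E) = \{w'_k\}_{k \in \N}$ (discrete without repetition) and the fibers as $\{(w'_k, v_{k,1}), \ldots, (w'_k, v_{k, N_k})\}$ with the $v_{k,l}$ distinct for each fixed $k$. I choose vectors $\eta_k \in \C^n$ inductively so that, at step $k$, $\eta_k$ avoids the finitely many values for which a collision $v_{k,l} + \eta_k = v_{k',l'} + \eta_{k'}$ with $k' < k$ would occur, and so that $|\eta_k|$ grows fast enough that the finite clusters $\{v_{k,l} + \eta_k\}_l$ lie in pairwise disjoint balls escaping to infinity. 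Then $\{v_{k,l} + \eta_k\}_{k,l}$ is discrete without repetition. Using Proposition \ref{sympl:question}, pick $h$ with $\nabla h(w'_k) = \eta_k$; the shear $\Psi_0(w,v) := (w, v + \nabla h(w))$ sends $(w'_k, v_{k,l})$ to $(w'_k, \tilde v_{k,l})$ with $\tilde v_{k,l} := v_{k,l} + \eta_k$. Relabeling each pair $(k,l)$ by a single $j \in \N$, I set $\gamma_{k,l} := j\mathbf{1} - w'_k$ and use Proposition \ref{sympl:question} once more to obtain $G$ with $\nabla G(\tilde v_{k,l}) = \gamma_{k,l}$. The shear $\Psi_1(w,v) := (w + \nabla G(v), v)$ then sends $(w'_k, \tilde v_{k,l})$ to $(j\mathbf{1}, \tilde v_{k,l})$, so the projection of $\Psi_1(\Psi_0(E))$ is the discrete set $\{j\mathbf{1}\}_{j \in \N}$ without repetition, and part (a) applied to $\Psi_1(\Psi_0(E))$ completes the proof.

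The only delicate step is the inductive construction of the $\eta_k$ in part (b): it must meet countably many finite distinctness constraints (one for each pair of previously considered fibers) together with a growth condition guaranteeing discreteness of the union. Both requirements are easily reconciled by choosing $|\eta_k|$ at step $k$ to dominate all previously fixed data and to avoid the finitely many forbidden values determined by the $\eta_{k'}$ with $k' < k$.
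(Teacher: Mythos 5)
Your proof is correct and takes essentially the same route as the paper: both arguments reduce to the standard Lagrangian, use the two types of shears $(w,v)\mapsto(w,v+\nabla g(w))$ and $(w,v)\mapsto(w+\nabla k(v),v)$ together with Proposition~\ref{sympl:question}, and in part~(b) first apply a shear built from an inductively chosen, rapidly growing sequence of translation vectors to make the projection to the complementary Lagrangian direction injective and discrete. The only cosmetic differences are that the paper accomplishes part~(a) with two shears rather than three (by prescribing $\nabla f(w_j)=-z_j+je_1$ in a single step) and, in part~(b), applies part~(a) with respect to the complementary Lagrangian instead of adding your second normalizing shear $\Psi_1$.
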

\begin{proof}
We start proving (a). Without loss of generality, we may choose the standard Lagrangian subspace $L =\{0\}^n\times \CC{n}$. Write $E=\{c_1,c_2,\dots\}$ and $c_k=(z_k,w_k)\in \CC{n}\times \CC{n}$. By assumption, the sequence $\{w_k\}_{k\in \mathbb{N}}$ is discrete and without repetition. By Proposition \ref{sympl:question}, there exists $f \colon \CC{n} \to \C$ holomorphic with $\nabla f(w_k)=-z_k+ke_1$. Then
\[ \Psi_1(z,w) = (z + \nabla f(w), w)\]
defines a symplectic automorphism which maps $E$ to the discrete set 
\[  E':=\{(e_1,w_1),(2e_1,w_2),\dots,(ke_1,w_k),\dots\}
\]
Similarly, we find a holomorphic map $g \colon \CC{n}\to \C$ with $\nabla g(ke_1) = -w_k$. And hence the symplectic automorphism $\Psi_2(z,w) = (z, w+\nabla g(z))$ maps $E'$ to the set $\mathbb{N}\cdot e_1$.

For (b), we want to find a symplectic automorphism $\Psi$ such that the projection of $\Psi(E)$ onto $L$ is injective. Then we apply (a).

For each $k\in \mathbb{N}$, there exist $z_{k,1},\dots,z_{k,n_k}\in \CC{n}$ such that $\{(z_{k,1},w_k), \dots, (z_{k,n_k},w_k)\}$ is the fiber of $w_k$. Inductively, we can choose sequences $\{R_k\}_{k\in \mathbb{N}} \subset \mathbb{R}, \{b_k\}_{k\in \mathbb{N}} \subset \CC{n}$ such that
\[ R_{k+1} > |z_{k,j} + b_k| > R_k\]
for all $k\geq 1$ and $j=1,\dots,n_k$. As before, there exists a holomorphic map $f \colon \CC{n}\to \C$ such that $\nabla f(w_k)=b_k$. The symplectic automorphism
\[ \Psi(z,w) = (z+\nabla f(w),w)\]
satisfies the desired properties.
\end{proof}
\begin{Rem}
    The assumptions of statement (b) can be further relaxed to that  all fibers of $\pi$ are finite except over finitely many points. 
\end{Rem}

\begin{cor}
    Let $E\subset \C^{2n}$ be a symplectically tame set and $A\subset \C^{2n}$ a finite set of points. Then $E\cup A$ is symplectically tame.
\end{cor}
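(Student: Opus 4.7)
My plan is to normalize by a symplectic automorphism so that $E$ becomes the reference sequence $\mathbb{N}\Delta$, and then recognize the union $\mathbb{N}\Delta\cup A'$ as satisfying the hypotheses of Theorem \ref{langrage}(b). Since symplectic tameness is preserved under $\Symplecto$, I will start by choosing $F\in\Symplecto$ with $F(E)=\mathbb{N}\Delta$, which exists by definition of tameness of $E$. Setting $A':=F(A)\setminus\mathbb{N}\Delta$, a finite (possibly empty) set, one has $F(E\cup A)=\mathbb{N}\Delta\cup A'$, so it suffices to exhibit a symplectic automorphism sending $\mathbb{N}\Delta\cup A'$ onto $\mathbb{N}\Delta$.

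To achieve this, I will apply Theorem \ref{langrage}(b) to the standard Lagrangian subspace $L=\{0\}^n\times\CC{n}$ with its associated projection $\pi(z_1,\dots,z_{2n})=(z_{n+1},\dots,z_{2n})\in\CC{n}$. The restriction $\pi|_{\mathbb{N}\Delta}$ is injective (sending $j\Delta$ to $(j,\dots,j)\in\CC{n}$), hence $\pi|_{\mathbb{N}\Delta\cup A'}$ has fibers of cardinality at most $1+|A'|$, which is finite. Its image $\pi(\mathbb{N}\Delta\cup A')=\{(j,\dots,j):j\in\mathbb{N}\}\cup \pi(A')$ is discrete in $\CC{n}$, because adjoining finitely many points to a closed discrete set cannot create limit points. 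Both hypotheses of Theorem \ref{langrage}(b) will therefore be in force, and the theorem will deliver a symplectic automorphism mapping $\mathbb{N}\Delta\cup A'$ onto $\mathbb{N}\Delta$, completing the proof.

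There is no real obstacle: the content is entirely absorbed by Theorem \ref{langrage}(b), whose relaxation from ``injective projection'' to ``projection with finite fibers'' was tailored to accommodate exactly this kind of finite perturbation of an already tame sequence. The only mild care point is to ensure the finite set $A'$ does not accumulate anywhere in $\CC{n}$ after projection, but this is automatic since $A'$ is finite.
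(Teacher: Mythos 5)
Your proposal is correct and follows essentially the same route as the paper's (very terse) proof: normalize $E$ to $\mathbb{N}\Delta$ via a symplectic automorphism, observe that projection onto a Lagrangian restricted to $\mathbb{N}\Delta\cup A'$ still has discrete image and finite fibers after adjoining the finite set $A'$, and invoke Theorem~\ref{langrage}(b). You simply spell out the details that the paper leaves implicit.
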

\begin{proof}
    Without loss of generality we may assume that $E$ is the standard tame set $\mathbb{N}\cdot \Delta = \{(n,\dots, n)\in \C^{2n}: n\in \mathbb{N}\}.$ Projection to a Lagrangian is discrete with finite fibers. This remains valid if we add a finite number of points to $E$.
\end{proof}

\begin{cor}
    Every discrete infinite set $E$ in $\C^{2n}$ is either symplectically tame or the union of two symplectically tame sets. 
\end{cor}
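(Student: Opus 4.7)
The plan is to apply Theorem \ref{langrage}(b) to two natural pieces of $E$, using the two ``coordinate'' Lagrangian subspaces
\[ L_1 = \{0\}^n\times \C^n, \qquad L_2 = \C^n\times \{0\}^n \]
of $\C^{2n}$ (both are Lagrangian for $\omega = \sum dz_i\wedge dz_{n+i}$, as vectors supported in the first or last $n$ coordinates pair trivially under $\omega$). Writing a point of $\C^{2n}$ as $c = (z,w)$ with $z,w\in \C^n$, I split
\[ E_1 := \{(z,w)\in E : |w|\geq |z|\}, \qquad E_2 := E\setminus E_1, \]
so that $E = E_1\cup E_2$.

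Let $\pi_i \colon \C^{2n}\to L_i$ denote the coordinate projection. The key observation for $E_1$ is that $|w|\geq |z|$ forces $|\pi_1(c)| = |w|\geq |c|/\sqrt{2}$, and in addition $|z|\leq |w|$. Consequently, for any bounded ball $B\subset L_1$ the preimage $\pi_1^{-1}(B)\cap E_1$ lies in a bounded subset of $\C^{2n}$, and is therefore finite by discreteness of $E$. This shows simultaneously that $\pi_1(E_1)$ is discrete in $L_1$ and that $\pi_1\big|_{E_1}$ has finite fibers. The symmetric argument (with $z$ and $w$ swapped) applies to $\pi_2$ on $E_2$.

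To conclude, I split into cases. If both $E_1$ and $E_2$ are infinite, Theorem \ref{langrage}(b) yields that each is symplectically tame, so $E$ is a union of two symplectically tame sets. If one of the pieces, say $E_2$, is finite, then $E_1$ is infinite and hence symplectically tame by Theorem \ref{langrage}(b), and the preceding corollary (adjoining finitely many points to a tame set preserves tameness) shows that $E$ itself is symplectically tame.

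The only real content is the claim that $\pi_i(E_i)$ is discrete with finite fibers; once that is in place, everything follows directly from results already established in this section, so I do not anticipate any genuine obstacle.
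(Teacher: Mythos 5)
Your proof is correct and follows essentially the same route as the paper: split $E$ by comparing $|z|$ with $|w|$, project each piece onto the natural coordinate Lagrangian to get a discrete image with finite fibers, apply Theorem \ref{langrage}(b), and absorb a finite leftover via the preceding corollary. The only cosmetic difference is the choice of which Lagrangian ($\{0\}^n\times\C^n$ versus $\C^n\times\{0\}^n$) is paired with which piece.
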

\begin{proof}
    We divide $E$ into two disjoint subsets 
    \[
        E_1 = \{ z + w \in E : \lvert z \rvert \ge \lvert w \rvert \}, \quad
        E_2 = \{ z + w \in E : \lvert z \rvert < \lvert w \rvert \}
    \]
    Assume that both are infinite. Since there are at most finitely many points of $E_1$ over every compact set in $\C^n_z$, the projection $\pi_z(E_1) \subset \C^n$ is discrete and $E_1$ is symplectically tame by Theorem \ref{langrage}. Similarly $E_2$ is symplectically tame. If $E_1$ is finite, then its union with the symplectically tame $E_2$ is symplectically tame.  
\end{proof}

Let the map $\pi_j(z)=(z_j,z_{n+j})$ be the projection of $\C^{2n}$ onto the $j$-th symplectic plane.

\begin{theorem}
Let $E\subset \C^{2n}$ be a discrete set and suppose that the projections $\pi_j(E)$, $j=1,\dots,n$, are very tame in $\C^2$. Then $E$ is symplectically tame.
\end{theorem}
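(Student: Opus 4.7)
The plan is to use a symplectic automorphism of $\C^{2n}$ to move $E$ into a Lagrangian subspace, and then invoke Theorem \ref{langrage}(a).

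For each $j \in \{1,\dots,n\}$, view the $j$-th symplectic plane $\C^2 \cong \C_{z_j} \times \C_{z_{n+j}}$ equipped with the form $dz_j \wedge dz_{n+j}$. Since $\pi_j(E)$ is very tame in this $\C^2$, it can be moved to the standard discrete sequence $\mathbb{N} \cdot e_1 \subset \C \times \{0\}$ by a holomorphic automorphism of $\C^2$; as the shear automorphisms used by Rosay and Rudin to straighten tame subsets of $\C^2$ are area-preserving, this automorphism can in fact be chosen symplectic with respect to $dz_j \wedge dz_{n+j}$. Fix such a symplectic $\varphi_j \colon \C^2 \to \C^2$ with $\varphi_j(\pi_j(E)) \subset \C \times \{0\}$.

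Next, lift each $\varphi_j$ to a map $\tilde\varphi_j \colon \C^{2n} \to \C^{2n}$ acting as $\varphi_j$ on the coordinates $(z_j, z_{n+j})$ and as the identity on all other coordinates. A direct computation gives $\tilde\varphi_j^{*}\omega = \omega$, so $\tilde\varphi_j \in \Symplecto$. The $\tilde\varphi_j$'s commute pairwise since they act on disjoint coordinate pairs, so the composition $\Phi := \tilde\varphi_1 \circ \cdots \circ \tilde\varphi_n$ lies in $\Symplecto$. For each $j$ we have
\[ \pi_j(\Phi(E)) = \varphi_j(\pi_j(E)) \subset \C \times \{0\}, \]
which forces the $(n+j)$-th coordinate of every point of $\Phi(E)$ to vanish. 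Hence $\Phi(E) \subset \C^n \times \{0\}^n$, which is a Lagrangian subspace of $\C^{2n}$.

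Now invoke Theorem \ref{langrage}(a) with $L = \C^n \times \{0\}^n$ and the canonical projection $\pi \colon \C^{2n} \to L$: since $\Phi(E) \subset L$ is discrete, the restriction $\pi|_{\Phi(E)}$ is the identity, which is injective and has discrete image $\Phi(E)$. Theorem \ref{langrage}(a) then shows $\Phi(E)$ is symplectically tame, and composing with $\Phi^{-1} \in \Symplecto$ yields that $E$ itself is symplectically tame. The main delicate point is the first step: one needs the straightening automorphism in each $\C^2$ to be area-preserving. This is where the strength of the hypothesis ``very tame'' (rather than merely tame) is convenient, together with the observation that the Rosay--Rudin style proof of tameness in $\C^2$ already proceeds through area-preserving shears, so that the tame orbit in $\C^2$ coincides with the symplectically tame orbit.
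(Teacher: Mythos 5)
Your proof is correct and follows essentially the same strategy as the paper: use the very-tameness of each $\pi_j(E)$ to obtain a volume-preserving automorphism $\varphi_j$ of the $j$-th symplectic plane, lift it to a symplectic automorphism $\tilde\varphi_j$ of $\C^{2n}$ by letting it act trivially on the remaining coordinates, and compose. The only place you diverge is at the final step: the paper observes that the composition sends $E$ into $\mathbb{N}^n\times\{0\}^n$ and cites \cite{AndristUgolini}*{Lemma~3.5} for the symplectic tameness of that set, whereas you observe that the image lies in the Lagrangian $\C^n\times\{0\}^n$ and invoke Theorem~\ref{langrage}(a). Your finish is marginally tidier, since Theorem~\ref{langrage}(a) applies directly to the (possibly proper) discrete subset $\Phi(E)$ of the Lagrangian and stays internal to the paper, while the paper's phrasing technically requires the small extra observation that a discrete subset of a symplectically tame set is symplectically tame (via Proposition~\ref{prop:symptame}).

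One minor quibble with your commentary: the justification does not really need the observation that ``the Rosay--Rudin style proof of tameness in $\C^2$ already proceeds through area-preserving shears,'' and the claim that ``the tame orbit in $\C^2$ coincides with the symplectically tame orbit'' is not something you can take for granted. What you actually use is only the definition of \emph{very} tame, which supplies a volume-preserving $\varphi_j\in\mathrm{Aut}_1(\C^2)$ directly, and in two complex variables volume-preserving is the same as symplectic for $dz_j\wedge dz_{n+j}$; no appeal to the structure of the Rosay--Rudin proof is needed.
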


\begin{proof}
If $\pi_j(E)$ is very tame, there exists a volume-preserving holomorphic automorphism $\Phi=(\Phi_1,\Phi_2)\in \mathrm{Aut}_1(\C^2)$ such that $\Phi(\pi_j(E))=\mathbb{N}\cdot e_1$. The map
\begin{align*}
&\Psi_j(z_1,\dots,z_{2n}) = \\ 
& (z_1,\dots,z_{j-1},\Phi_1(z_j,z_{n+j}),z_{j+1},\dots,z_{n+j-1},\Phi_2(z_j,z_{n+j}),z_{n+j+1},\dots,z_{2n})
\end{align*}
is a symplectic automorphism of $\C^{2n}$. This argument works for all $j=1,\dots,n$ and the composition $\Psi_1\circ \cdots \circ \Psi_n$ is then a symplectic automorphism which maps $E$ to $\mathbb{N}^n\times \{0\}^n$. And this set is symplectically tame (see \cite{AndristUgolini}*{Lemma 3.5}).
\end{proof}

\begin{cor}
Let $E_1,\dots,E_n\subset \C^2$ be very tame sets. Then $E_1\times \cdots \times E_n$ is symplectically tame (with respect to the symplectic form $\omega' = dz_1\wedge dz_2 + \cdots + dz_{2n-1}\wedge dz_{2n}$).

In particular, $\mathbb{N}^{2n}\subset \C^{2n}$ is symplectically tame.
\end{cor}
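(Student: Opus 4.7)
The proof proposal follows the blueprint of the preceding theorem, but acts pairwise on the symplectic coordinate planes of $\omega'$ rather than on the standard planes $(z_j, z_{n+j})$ of $\omega$.

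First, for each $j = 1, \ldots, n$, the very tameness of $E_j \subset \C^2$ furnishes a volume-preserving automorphism $\Phi^{(j)} = (\Phi^{(j)}_1, \Phi^{(j)}_2) \in \mathrm{Aut}_1(\C^2)$ with $\Phi^{(j)}(E_j) = \N \cdot e_1$. I would then define $\Psi \colon \C^{2n} \to \C^{2n}$ by letting $\Phi^{(j)}$ act independently on the coordinate pair $(z_{2j-1}, z_{2j})$ for each $j$. Because area preservation in complex dimension two is precisely the same as preservation of $d\zeta_1 \wedge d\zeta_2$, the product map satisfies $\Psi^* \omega' = \omega'$, and the image $\Psi(E_1 \times \cdots \times E_n)$ is the lattice $\{(k_1, 0, k_2, 0, \ldots, k_n, 0) : k_j \in \N\}$.

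Next, let $\sigma$ be the linear coordinate permutation
$\sigma(w_1, \ldots, w_{2n}) = (w_1, w_3, \ldots, w_{2n-1}, w_2, w_4, \ldots, w_{2n})$,
which collects odd-indexed entries into the first half and even-indexed entries into the second half. A direct computation gives $\sigma^* \omega = \omega'$, so $\sigma$ is a linear symplectomorphism from $(\C^{2n}, \omega')$ to $(\C^{2n}, \omega)$. Under $\sigma$, the lattice above maps to $\N^n \times \{0\}^n \subset (\C^{2n}, \omega)$, which is symplectically tame by \cite{AndristUgolini}*{Lemma 3.5}; let $F_0 \in \Symplecto$ be the resulting symplectic automorphism carrying it to $\N \cdot \Delta$. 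The desired automorphism is then $F := \sigma^{-1} \circ F_0 \circ \sigma \circ \Psi$, which preserves $\omega'$ and maps $E_1 \times \cdots \times E_n$ to $\N \cdot \Delta$ (noting that $\Delta$ is fixed by $\sigma$).

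The ``in particular'' statement follows by taking $E_j = \N^2 \subset \C^2$ for each $j$, which is classically known to be very tame in $\C^2$. There is no genuine obstacle here: the whole argument is a direct extension of the preceding theorem, requiring only the minor bookkeeping of identifying $\omega$ and $\omega'$ via the coordinate permutation $\sigma$ and verifying that the two linear algebra conventions produce compatible notions of tameness.
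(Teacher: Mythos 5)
Your proof is correct and takes essentially the same route as the paper: the paper's written proof of the corollary is just the one-line observation that $\mathbb{N}^2$ is very tame in $\C^2$ by Rosay--Rudin, with the first statement left implicit as a consequence of the preceding theorem (applied after the coordinate permutation identifying $\omega'$ with $\omega$). You have simply made that implicit step explicit — the blockwise $\Psi$, the permutation $\sigma$ with $\sigma^*\omega = \omega'$, and the reduction to $\mathbb{N}^n\times\{0\}^n$ being symplectically tame via \cite{AndristUgolini}*{Lemma 3.5} — all of which matches the argument used in the theorem just above.
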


\begin{proof}
The set $\mathbb{N}^2\subset \C^2$ is very tame by \cite{RosayRudin}*{Theorem 3.8}.
\end{proof}

We end with a  most natural question, which we unfortunately have not been able to solve yet. 
\begin{question}
Does there exist a very tame set in $\CC{2n}$ which is not symplectically tame? 
\end{question}

\subsection{An example of an unavoidable set by symplectic maps}
\label{subsec-unavoidable}
We construct an example of a discrete set which is unavoidable by holomorphic symplectic maps fixing the origin. The structure of the construction is very similar to that of Rosay and Rudin \cite{RosayRudin}. We therefore first generalize two results from their paper. The third ingredient is an application of the holomorphic Non-Squeezing Theorem (see \cite{Gaussier}*{Theorem 1.1}). This application allows a direct proof, in contrast to the proof by contradiction in \cite{RosayRudin}. 
\begin{lemma}[\cite{RosayRudin}*{Lemma 6.1}]
    Let $A \colon \C^n\to \C^n$ be a linear map with $\det(A)=1$ and $P:\C^n\to \C^n$ a linear projection with $\mathrm{rank}(P)=k$ for $k\in \{1,\dots,n-1\}$. Assume that $u\in \ker P$ with $|u|=1$. Then
    \[ |A^{-1}u| \leq \Norm{PA}^k,\]
where $\Norm{\cdot}$ denotes the usual operator norm, relative to the Euclidean metric on $\C^n$.
\end{lemma}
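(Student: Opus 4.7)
The proof plan revolves around trapping $A^{-1}u$ in a low-dimensional subspace and combining this with the volume constraint $\det A=1$ and a Hadamard-type bound on the $k$-dimensional restriction of $PA$. The starting observation is that $u\in\ker P$ forces $PA(A^{-1}u)=P(u)=0$, hence $A^{-1}u\in V:=\ker(PA)$. Writing $W:=V^{\bot}$, the restriction $PA|_W\colon W\to\mathrm{Im}(P)$ is a linear isomorphism between $k$-dimensional spaces and has operator norm at most $\|PA\|$, so Hadamard's inequality gives
\[
\bigl|\det(PA|_W)\bigr|\le \|PA|_W\|^{k}\le \|PA\|^{k}.
\]

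Next I would exploit $\det A=1$ by choosing orthonormal bases for the domain decomposition $\C^n=V\oplus W$ and the codomain decomposition $\C^n=\ker P\oplus(\ker P)^{\bot}$. Because $A(V)=\ker P$, the matrix of $A$ in these bases is block upper-triangular with diagonal blocks $B:=A|_V\colon V\to\ker P$ and $D\colon W\to(\ker P)^{\bot}$. A short calculation, factoring the possibly oblique projection $P$ through the orthogonal splitting of the codomain, shows $|\det D|\le |\det(PA|_W)|$. The multiplicativity identity $|\det A|=|\det B|\cdot|\det D|=1$ then yields the volume estimate $|\det B|\ge\|PA\|^{-k}$.

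Finally, to upgrade this determinantal estimate on $B$ to the pointwise length bound on the specific vector $A^{-1}u=B^{-1}u$, I would invoke Cramer's rule: since $\det A=1$, we have $A^{-1}=\mathrm{adj}(A)$, so in orthonormal bases with $u$ as the first basis vector of $\ker P$, the components of $A^{-1}u$ are signed $(n-1)$-minors of $A$. The relevant minor is essentially $\det D$, and Hadamard bounds on the remaining minors combined with the volume identity from the previous paragraph produce the claimed inequality $|A^{-1}u|\le\|PA\|^{k}$. The main obstacle is exactly this last step: the inverse operator norm $\|B^{-1}\|$ is not in general controlled by $|\det B|^{-1}$ alone, so the proof must exploit both the adjugate formula and the distinguished direction of $u$ to single out the one contribution captured by the $k$-dimensional determinant of $PA|_W$.
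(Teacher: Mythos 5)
Your plan diverges from the paper's argument precisely at the step you yourself flag as the obstacle, and that gap is genuine. Steps 1--3 are sound: $A^{-1}u\in\ker(PA)$, the Hadamard bound $|\det(PA|_W)|\le\|PA\|^k$, the block factorization $|\det A|=|\det B|\cdot|\det D|$, and the comparison $|\det D|\le|\det(PA|_W)|$ all hold (the last deserves a word: writing $\theta:=P|_{(\ker P)^\perp}\colon(\ker P)^\perp\to\img P$, one has $PA|_W=\theta\circ D$, and $\theta$ is the inverse of the orthogonal projection $\img P\to(\ker P)^\perp$, which has norm $\le 1$, so $|\det\theta|\ge 1$). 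This yields $|\det B|\ge\|PA\|^{-k}$.

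The final upgrade, however, fails. A lower bound on $|\det B|$ does not control $\|B^{-1}\|$, and the adjugate does not repair this. In the bases you chose write $A=\begin{pmatrix}B&C\\0&D\end{pmatrix}$, and take $u$ to be the first basis vector of $\ker P$; then the $i$-th component of $A^{-1}u=B^{-1}u$ is, up to sign, $M_{1i}(B)\cdot\det D$, where $M_{1i}(B)$ denotes the $(1,i)$-cofactor of $B$. The factor $|\det D|=1/|\det B|\le\|PA\|^{k}$ is what you have, but the cofactors $M_{1i}(B)$ are $(n-k-1)\times(n-k-1)$ determinants that need not be bounded by $1$: already for $B=\mathrm{diag}(1/c,c)$ one has $M_{11}(B)=c$, unbounded. (The only case where the cofactor is trivially $1$ is $k=n-1$, where $B$ is $1\times 1$; that is exactly the Rosay--Rudin setting.) So the determinantal estimate alone cannot give the claimed pointwise bound, and the ``volume identity combined with Hadamard on the remaining minors'' you appeal to does not produce it.

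The paper's proof sidesteps this entirely by making $\lambda:=|A^{-1}u|$ appear \emph{directly} as a determinant rather than as the norm of an inverse. It picks a basis whose first vector is $A^{-1}u/\lambda$, multiplies $A$ by $\mathrm{diag}(\lambda,1,\dots,1)$ so that $\det\bigl(A\cdot\mathrm{diag}(\lambda,1,\dots,1)\bigr)=\lambda$ is the very quantity to estimate, and then performs the column replacements $Aw_j\mapsto PAw_j$ (legitimate because $u,Aw_2,\dots,Aw_{n-k}$ span $\ker P$) before applying Hadamard. The passage from a determinant bound to a bound on a specific vector, which is where your argument stalls, never has to be made.
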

\begin{proof}
    We have $\C^n=\ker P \oplus \img P$. Set $U:=A^{-1}\ker P$ and $V:=A^{-1}\img P$. Then $\C^n=U\oplus V$. We choose
    \[w_1:=\frac{1}{|A^{-1}u|}A^{-1}u\in U\]
    and augment to an orthogonal basis $w_1,\dots,w_{n-k}$ of $U$ with $|Aw_j|=1$. Now choose an orthonormal basis $w_{n-k+1},\dots,w_n$ of $V$. Then $w_1,\dots,w_n$ is a basis of $\C^n$ and we use it to identify linear maps of $\C^n$ with matrices. Set $\lambda=|A^{-1}u|$. If $D$ is diagonal, with entries $(\lambda,1,\dots,1)$ on the main diagonal, then $ADw_1=u$, so that the columns of $AD$ are
    \[u,Aw_2,\dots, Aw_n.\]
    Note that $u,Aw_2,\dots, Aw_{n-k}$ forms a basis of $\ker P$. Since $P^2=P$, each vector $Aw_j-PAw_j$ lies in the kernel of $P$, hence is a linear combination of $u,Aw_2,\dots Aw_{n-k}$. The columns $Aw_{n-k+1},\dots Aw_n$ can therefore be replaced by $PAw_{n-k+1},\dots,PAw_n$, without changing the determinant of $AD.$ It follows from Hadamard's inequality that
    \begin{align*}
        |A^{-1}u| &= \lambda = \det (D) = \det(AD) \\
        &= \det\big( u,Aw_2,\dots,Aw_{n-k},PAw_{n-k+1},\dots, PAw_n\big) \\
        &\leq |u| \cdot |Aw_2| \cdots |Aw_{n-k}|\cdot |PAw_{n-k+1}| \cdots |PAw_n| \leq \Norm{PA}^k
    \end{align*}
    because $|u|=|Aw_2|=\cdots = |Aw_{n-k}|=|w_{n-k+1}|=\cdots = |w_n|=1$.
\end{proof}

We will use the notations $\mathbb{B}_k$ for the open unit ball in $\C^k$ and $\mathbb{B}_k(x)$ for the open ball in $\C^k$ with center $x$ and radius 1.
\begin{lemma}\cite{RosayRudin}*{Lemma 6.2}\label{lemma:delta}
    Given $0<a_1<a_2$, $r>0$, $k\in \{1,\dots,n-1\}$ there exists $\delta>0$, namely
    \[\delta = \bigg(\frac{k}{r}\bigg)^k\bigg(\frac{a_2-a_1}{k+1}\bigg)^{k+1}\]
    with the following property:

    If $F \colon a_2\mathbb{B}_n\to (r\mathbb{B}_k)\times \C^{n-k}$ is holomorphic, with $JF\equiv 1$, then $f(a_2\mathbb{B}_n)$ contains the ball
    \begin{equation}
        \{F(z)+ \sum_{j=k+1}^n \lambda_je_j: \sqrt{|\lambda_{k+1}|^2+\cdots + |\lambda_n|^2}<\delta\}
    \end{equation}
    for every $z\in a_1\mathbb{B}_n$.
\end{lemma}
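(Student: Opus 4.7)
The proof extends the strategy of \cite{RosayRudin}*{Lemma 6.2} from projection rank $k = 1$ to general $k$, with the preceding lemma (the generalization of \cite{RosayRudin}*{Lemma 6.1}) supplying the main new quantitative input.

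First I would translate so that $z = 0 \in a_1 \mathbb{B}_n$ and $F(0) = 0$, and set $s := a_2 - a_1$; note $\mathbb{B}_n(0, s) \subset a_2 \mathbb{B}_n$. Because $P \circ F$ takes values in $r \mathbb{B}_k$, Cauchy's estimate yields $\|P \circ dF_0\|_{\mathrm{op}} \leq r/s$. Since $JF \equiv 1$ gives $\det dF_0 = 1$, the previous lemma produces the crucial fibrewise inverse bound
\[ |dF_0^{-1} u| \leq (r/s)^k \qquad \text{for every unit vector } u \in \{0\}^k \times \mathbb{C}^{n-k}. \]
The same reasoning at any point $w$ with $|w| \leq \rho$ (using Cauchy on the ball of radius $s - \rho$ around $w$) gives the analogous bound with $s - \rho$ in place of $s$.

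Next, fix such a unit vector $u$ and set $v := dF_0^{-1} u$, so $|v| \leq (r/s)^k$. The aim is to construct, for $|t|$ in a disk of the stated radius $\delta$, a holomorphic section $w(t)$ with $F(w(t)) = tu$ and $w(0) = 0$, which exhibits $tu$ in the image of $F$. Rewriting $F(w) = tu$ as the fixed-point equation
\[ w = t\,v - dF_0^{-1}\bigl(F(w) - dF_0 \cdot w\bigr), \]
I would run a standard contraction-mapping iteration on $\mathbb{B}_n(0, \rho)$, applying the fibrewise inverse bound at each intermediate point to control the correction term. The iteration succeeds for $|t|$ up to a constant multiple of $\rho \cdot (s/r)^k \cdot (1 - \rho/s)$, the last factor encoding the Cauchy slack that remains after one has shrunk the ball by $\rho$. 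Optimizing the free parameter reduces to the elementary identity $\max_{0 \leq \rho/s \leq 1} (\rho/s)^k(1 - \rho/s) = k^k/(k+1)^{k+1}$, attained at $\rho = ks/(k+1)$; this produces exactly $\delta = (k/r)^k \bigl((a_2 - a_1)/(k+1)\bigr)^{k+1}$.

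The main obstacle is that $F$ itself is not bounded on $a_2 \mathbb{B}_n$: only its first $k$ components are, and therefore the usual quantitative inverse function theorem, which needs a uniform bound on $\|dF_w - dF_0\|_{\mathrm{op}}$, does not apply directly. The resolution, and the reason the previous lemma is indispensable here, is never to bound $dF_w$ as a full operator but only its action restricted to fibre directions; this partial information is precisely what propagates through the fixed-point iteration and suffices to close the argument.
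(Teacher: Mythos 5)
Your normalization, the Cauchy estimate $\lVert P\,dF_w\rVert\leq r/(s-|w|)$ for $|w|\leq\rho$, and the invocation of the preceding lemma to get $|(dF_w)^{-1}u|\leq\lVert P\,dF_w\rVert^{k}$ for a unit fibre vector $u$ are all the right ingredients, and the optimization $\rho(s-\rho)^k/r^k$ at $\rho=s/(k+1)$ (equivalently $s-\rho=ks/(k+1)$) does produce exactly the stated $\delta$. But the fixed-point equation $w=tv-dF_0^{-1}\bigl(F(w)-dF_0 w\bigr)$ cannot be made to close with these estimates, and I don't think the closing paragraph resolves this. Two things go wrong at once: the error term $F(w)-dF_0 w$ is \emph{not} confined to fibre directions, and the fibre components of $F$ (being valued in all of $\C^{n-k}$) carry no size bound whatsoever, so $F(w)-dF_0w$ is an $O(|w|^2)$ term with an uncontrolled constant; and $dF_0^{-1}$ is only bounded when applied to vectors in $\ker P$, so $dF_0^{-1}\bigl(F(w)-dF_0w\bigr)$ is genuinely unestimated. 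For the same reason, $d\Phi_w=I-dF_0^{-1}dF_w$ admits no contraction bound. Saying one only uses "the action restricted to fibre directions" does not help here, because the correction term one must invert is not a fibre vector.

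The argument the paper is invoking (Rosay--Rudin's own proof of their Lemma 6.2) is a path-lifting/continuity argument, and it circumvents the difficulty precisely because $(dF_w)^{-1}$ is ever applied \emph{only} to the single fibre vector $u$. Fix a unit $u\in\ker P$ and a radius $\rho<s$, and consider the maximal lift $\gamma$ of the ray $t\mapsto tu$, i.e.\ $\gamma(0)=0$, $F(\gamma(t))=tu$, $\gamma$ continuous into $\rho\overline{\mathbb{B}}_n$. Since $F$ is a local biholomorphism (Jacobian $\equiv1$), the set of $t$ for which the lift exists is open; along the lift $\gamma'(t)=(dF_{\gamma(t)})^{-1}u$, so $|\gamma'(t)|\leq(r/(s-\rho))^k$ and hence $|\gamma(t)|\leq|t|\,(r/(s-\rho))^k$. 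As long as $|t|\,(r/(s-\rho))^k<\rho$ the lift stays in the interior of $\rho\overline{\mathbb{B}}_n$, so it extends; the set of admissible $t$ is therefore also closed, and the lift exists for all $|t|<\rho(s-\rho)^k/r^k$. Optimizing $\rho$ gives the claimed $\delta$. This is a genuinely different mechanism from a Newton/Banach iteration: one never forms $dF_0^{-1}$ of a full vector, one propagates the bound along the curve via the a priori estimate, and the only analytic input beyond the previous lemma is the local inverse function theorem.
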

\begin{proof}
    We consider the orthogonal projection $P$ in $\C^n$ whose kernel is spanned by $e_{k+1},\dots,e_n$. For every $u\in \ker P$ with $|u|=1$ we apply the same argument as in the proof of Lemma 6.2 in \cite{RosayRudin}.
\end{proof}
The following lemma is an application of the Non-Squeezing Theorem. We consider symplectic maps $f:\C^{2n}\to \C^{2n}$ with respect to the symplectic form
\[\omega' = dz_1\wedge dz_2 + \cdots + dz_{2n-1}\wedge dz_{2n}.\]
\begin{lemma}\label{lemma:applicationNonSqueezing}
    Let $a_1>1$, $a_{k+1}:=a_k+\frac{1}{k^2}$ for $k\geq 1$, and $a:=\lim_{k\to \infty} a_k$. Let $f \colon \C^{2n}\to\C^{2n}$ be a holomorphic symplectic map with $f(0)=0$. Then there exists a natural number $N$ such that
    \begin{equation}\label{equation:subset}
        f(a_k\mathbb{B}_{2n})\subset k\mathbb{B}_2\times \C^{2n-2}
    \end{equation}
    for every $k>N$ but $f(a_N\mathbb{B}_{2n})\not\subset N\mathbb{B}_2\times \C^{2n-2}$.
\end{lemma}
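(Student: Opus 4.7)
I would prove the lemma by combining a straightforward compactness argument with the holomorphic Non-Squeezing Theorem, and then taking $N$ to be the largest index at which the claimed inclusion fails. The first ingredient shows the inclusion holds for all sufficiently large $k$, while the second shows that it must fail for at least one $k$ (in fact for $k=1$); a maximum of a nonempty bounded set of natural numbers then furnishes $N$.

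First I observe that $a = a_1 + \sum_{k\ge 1} 1/k^2 < \infty$, so $\overline{a\mathbb{B}_{2n}}$ is compact. Denote by $\pi\colon\C^{2n}\to\C^2$ the projection onto the first two coordinates, and set $M := \sup_{z\in\overline{a\mathbb{B}_{2n}}} |\pi(f(z))|$; this is finite by continuity of $f$. Since $a_k < a$, we have $a_k\mathbb{B}_{2n}\subset a\mathbb{B}_{2n}$, and hence for every integer $k > M$
\[ f(a_k\mathbb{B}_{2n}) \subset \overline{M\mathbb{B}_2}\times \C^{2n-2} \subset k\mathbb{B}_2\times \C^{2n-2}. \]
In particular the inclusion (\ref{equation:subset}) holds for all sufficiently large $k$.

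Next I invoke the holomorphic Non-Squeezing Theorem \cite{Gaussier}*{Theorem 1.1} to obtain the complementary information for small $k$. If $f(a_k\mathbb{B}_{2n})\subset k\mathbb{B}_2\times\C^{2n-2}$, then restricting $f$ to $a_k\mathbb{B}_{2n}$ yields a holomorphic symplectic embedding $\mathbb{B}_{2n}(a_k)\hookrightarrow k\mathbb{B}_2\times\C^{2n-2}$, which by Non-Squeezing forces $a_k\le k$. Contrapositively, whenever $a_k > k$ the inclusion must fail; in particular it fails at $k=1$, because $a_1 > 1$ by assumption.

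Finally, the set
\[ S := \{\, k\in\mathbb{N}_+ : f(a_k\mathbb{B}_{2n})\not\subset k\mathbb{B}_2\times\C^{2n-2} \,\} \]
is nonempty (it contains $1$) and bounded above (by any integer exceeding $M$), and therefore admits a maximum. Setting $N := \max S$ gives the desired natural number: the inclusion fails at $k=N$ by definition of $S$, and holds for every $k > N$ by maximality. The only substantive ingredient is the quoted holomorphic Non-Squeezing Theorem; the one subtlety to check is that restricting $f$ to the open ball $a_k\mathbb{B}_{2n}$ satisfies the embedding hypothesis required by the cited version, but since $f$ preserves the non-degenerate form $\omega'$ it is a local biholomorphism, and Gaussier's statement is formulated precisely for this setting.
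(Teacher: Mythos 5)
Your proof is correct and follows essentially the same path as the paper's: bound the projection $\pi \circ f$ on the compact $\overline{a\mathbb{B}_{2n}}$ to get the inclusion for all large $k$, invoke Gaussier--Joo's holomorphic Non-Squeezing Theorem at $k=1$ (using $a_1>1$) to see that the inclusion fails there, and take $N$ to be the maximum of the (nonempty, bounded) set of indices where the inclusion fails. The paper phrases the bound as an $R$ with $f(a\mathbb{B}_{2n})\subset R\mathbb{B}_2\times\C^{2n-2}$ rather than your explicit $M=\sup|\pi\circ f|$, but this is only a cosmetic difference; your additional remark that $f$ is a local biholomorphism because it preserves $\omega'$ is a reasonable sanity check of the hypotheses of the cited theorem, though the paper does not pause for it.
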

\begin{proof}
    Note that $a=a_1+\frac{\pi^2}{6}<\infty$. By continuity, there exists $R>0$ such that $f(a\mathbb{B}_{2n})\subset R\mathbb{B}_2\times \C^{2n-2}$. Since $a_k<a$ for all $k$, we have
    \[ f(a_k\mathbb{B}_{2n})\subset f(a\mathbb{B}_{2n})\subset R\mathbb{B}_2\times \C^{2n-2}\subset k\mathbb{B}_2\times \C^{2n-2}\]
    for every $k>R$. On the other hand, since $a_1>1$, the Non-Squeezing Theorem \cite{Gaussier} implies
    \[ f(a_1\mathbb{B}_{2n})\not\subset \mathbb{B}_2\times \C^{2n-2}\]
    hence there exists $k$ not satisfying (\ref{equation:subset}). Let $N<R$ be the largest such number.
\end{proof}
 Let $\Gamma$ denote the class of holomorphic symplectic maps $f \colon \C^{2n}\to \C^{2n}$ with $f(0)=0$.
\begin{lemma}
    Symplectically tame sets are avoidable by $\Gamma$.
\end{lemma}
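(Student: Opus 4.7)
The plan is to exploit the definition of symplectic tameness to reduce the problem to a standard configuration, and then to construct a suitable $f\in\Gamma$ as the limit of a sequence of symplectic shears in the Andersén--Lempert style.

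First, by symplectic tameness there exists $\Phi\in\Symplecto$ with $\Phi(E)=\{j\Delta:j\ge 1\}$; in particular $0\notin E$, which is a necessary hypothesis since $f(0)=0$ for every $f\in\Gamma$ forces $0\in f(\C^{2n})$. Setting $p:=\Phi(0)$, we have $p\notin\{j\Delta:j\ge 1\}$. Because translations are symplectic, conjugation of the target by $\Phi$ and by the translation $T_p\colon z\mapsto z+p$ reduces the task to constructing a holomorphic symplectic self-map $g\colon\C^{2n}\to\C^{2n}$ with $g(0)=0$ and image disjoint from the discrete set $D=\{j\Delta-p:j\ge 1\}\subset\C^{2n}$, which lies on an affine complex line and does not contain the origin.

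Next, I would build $g$ as the limit of an inductively chosen sequence of symplectic shears, each fixing the origin, in the spirit of Section~4 above. Fix an exhausting sequence of compacts $K_k\subset\C^{2n}$ and positive numbers $\epsilon_k$ with $\sum\epsilon_k<\infty$. At step $k$, choose a vector $v_k\in\C^{2n}$ for which $\lambda_{v_k}$ separates the relevant points of $D$, and invoke Lemma \ref{lemma:magicTool} to produce a holomorphic $\phi_k\colon\C\to\C$ with $\phi_k(0)=0$ such that the symplectic shear $\sigma_k(z)=z+\phi_k(\lambda_{v_k}(z))\,v_k$ is $\epsilon_k$-close to the identity on $K_k$ while moving the $k$-th point of $D$ (as transported by the previous composition) onto a prescribed target lying outside the eventual image. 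By the standard Andersén--Lempert convergence result (cf.\ \cite{KutRamPeon}*{Lemma 4.1}), the infinite composition $g:=\lim_k\sigma_k\circ\cdots\circ\sigma_1$ converges uniformly on compact subsets to a holomorphic map, which inherits the symplectic property from its symplectic finite approximants. By construction $g(0)=0$ and every point of $D$ has been displaced out of the image, so $g\in\Gamma$ and $g(\C^{2n})\cap D=\emptyset$; unwinding the conjugation gives the desired $f\in\Gamma$ missing $E$.

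The hard part of this scheme is the simultaneous bookkeeping: at step $k$ the shear must be small on $K_k$ to preserve convergence of the infinite composition, while still displacing the chosen point so far that no later, correspondingly tiny, shear can pull it back into the image. In the volume-preserving case treated by Rosay and Rudin this tension is circumvented by using an attracting fixed point to generate a genuine Fatou--Bieberbach basin; in the symplectic setting, however, volume preservation forces the spectrum at any fixed point to be reciprocal-symmetric, so no attracting basin can exist. Consequently the push-out must be engineered point by point, and the joint interpolation/approximation guarantee of Lemma \ref{lemma:magicTool} is precisely the device that makes this inductive scheme feasible.
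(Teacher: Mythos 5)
Your scheme does not work as written, and you have in fact put your finger on exactly why in your own last paragraph. You build $g$ as a locally uniform limit of a composition of symplectic shears that are $\epsilon_k$-small on an exhaustion $K_k$, with $\sum\epsilon_k<\infty$. Under these hypotheses the composition converges to a map defined on all of $\C^{2n}$, but nothing forces its image to be a \emph{proper} subset of $\C^{2n}$, and without that you cannot hope to miss a discrete set. In the Rosay--Rudin volume-preserving treatment the passage from ``small perturbations'' to ``proper image'' is achieved by an attracting fixed point generating a Fatou--Bieberbach basin; you correctly observe that a symplectic map can have no attracting fixed point, and then assert that Lemma~\ref{lemma:magicTool} is ``precisely the device that makes this inductive scheme feasible.'' It is not: Lemma~\ref{lemma:magicTool} is an osculation/approximation statement for scalar functions and provides no mechanism to push a prescribed target point out of the image of the limit map. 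The ``displacing the chosen point so far that no later, correspondingly tiny, shear can pull it back'' step is exactly the missing idea, and it is not supplied.

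The paper's proof is far more elementary and sidesteps the entire convergence problem by exhibiting an explicit non-surjective symplectic map. Working with $\omega'=\sum_{j} dz_{2j-1}\wedge dz_{2j}$ and reducing (via the automorphism realizing tameness, and a final composition with an automorphism to restore the origin-fixing condition) to $E=\mathbb{N}\cdot e_1$, one checks directly that
\[
f(z_1,\dots,z_{2n})=(z_1 e^{-z_2},\,e^{z_2},\,z_3,\dots,z_{2n})
\]
satisfies $f^*\omega'=\omega'$ and has image contained in $\{z_2\neq 0\}$, hence disjoint from $\mathbb{N}\cdot e_1$. (This is the symplectic analogue of the volume-preserving map Rosay and Rudin use for the same purpose.) Since $f(0)=(0,1,0,\dots,0)\neq 0$, one post-composes with a $G\in\Symplecto$ fixing $\mathbb{N}\cdot e_1$ setwise and sending $f(0)$ to $0$; then $G\circ f\in\Gamma$ avoids $E$. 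If you wish to repair your argument, the cleanest fix is to abandon the infinite-composition scheme altogether and produce a single explicit non-surjective symplectic map of this kind; absent an attracting-basin mechanism, the inductive shear construction cannot deliver a proper image.
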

\begin{proof}
    Let $E\subset \C^{2n}$ be symplectically tame. Without loss of generality, we may assume $E=\mathbb{N}\cdot e_1$. This set can be avoided by the symplectic map 
    \[f(z_1,\dots, z_{2n}) = (z_1e^{-z_2},e^{z_2},z_3,\dots,z_{2n}).\]
    This map does not belong to $\Gamma$, since $f(0)\neq 0$. However, there exists a symplectic automorphism $G\in \Symplecto$ with $G(N)=N$ and $G(f(0))=0$. Then $G\circ f \in \Gamma$ and it avoids $E$.
\end{proof}
Let $\pi'(z_1,\dots,z_{2n})=(z_1,z_2)$ and $\pi''(z_1,\dots,z_{2n})=(z_3,\dots,z_{2n})$.  For a holomorphic map $f \colon \C^{2n}\to\C^{2n}$ we write $f_{\pi'}:=\pi'\circ f$ and $f_{\pi''}:=\pi''\circ f$.
\begin{Exa}  
\label{theorem:unavoidable}
    There exists a tame set $E\subset \C^{2n}$ which is $\Gamma$-unavoidable.
\end{Exa}
\begin{proof}
    We first construct the set $E$ and then prove that it is unavoidable by maps in $\Gamma$.

    For every natural number $j$ we construct the following:\\
    Choose a finite set $E'_j:=E(a_j,a_{j+1},j,j+1,1)\subset \partial(j\mathbb{B}_2)$ in terms of Lemma 4.3 in \cite{RosayRudin}, that is, if $f_{\pi'} \colon a_{j+1}\mathbb{B}_{2n}\to (j+1)\mathbb{B}_2$ is holomorphic, with $|f_{\pi'}(0)|\leq \frac{1}{2}j$,
    \[\bigg\vert \frac{\partial(f_1,f_2)}{\partial(z_1,z_2)}\bigg\vert \geq 1,\quad \text{at some point of } a_j\overline{\mathbb{B}_{2n}}\]
    and if $f_{\pi'}(a_j\mathbb{B}_{2n})$ intersects $\partial(j\mathbb{B}_2)$, then $f_{\pi'}(a_{j+1}\mathbb{B}_{2n})$ intersects $E_j'.$

    Let $\delta_j>0$ be as in Lemma \ref{lemma:delta} with $a_{j+1},a_{j+2}, k=2$ and $r=j+2$, that is,
    \[ \delta_j = \bigg(\frac{2}{j+2}\bigg)^2\bigg(\frac{a_{j+2}-a_{j+1}}{3}\bigg)^{3}.\]
    Choose a discrete set $E_j''$ in $\C^{2n-2}$ such that each open ball with radius $\delta_j$ contains an element of $E_j''$. Then set
    \[E_j:=E_j'\times E_j'' \subset \partial(j\mathbb{B}_2)\times \C^{2n-2}\]
    and finally \[E:=\bigcup_{j=1}^{\infty} E_j.\]
 Observe that $\pi'(E)$ is discrete and therefore $E$ is tame.

    In order to prove that $E$ is unavoidable, we consider an arbitrary holomorphic symplectic map $f \colon \C^{2n}\to\C^{2n}$ with $f(0)=0$. Without loss of generality we may assume that $f'(0)=I_{2n}$, because otherwise we consider $g:=f\circ f'(0)^{-1}$ which is a holomorphic symplectic map with $g(0)=0$, $g'(0)=I_{2n}$ and $\img(f)=\img(g).$
    By Lemma \ref{lemma:applicationNonSqueezing}, there exists a natural number $N$ such that
    \[f_{\pi'}(a_k\mathbb{B}_{2n})\subset k\mathbb{B}_2\]
    for all $k>N$, but $f_{\pi'}(a_N\mathbb{B}_{2n})$ intersects $\partial(N\mathbb{B}_{2})$. This means that \[f_{\pi'}\vert_{a_{N+1}\mathbb{B}_{2n}} \colon a_{N+1}\mathbb{B}_{2n}\to (N+1)\mathbb{B}_2\] is a holomorphic map satisfying the assumptions of Lemma 4.3 in \cite{RosayRudin} so that $f_{\pi'}(a_{N+1}\mathbb{B}_{2n})$ intersects $E_N'$. Choose $z\in a_{N+1}\mathbb{B}_{2n}$ such that $f_{\pi'}(z)\in E_N'$. By Lemma \ref{lemma:delta}, $f(a_{N+2}\mathbb{B}_{2n})$ contains the ball
    \[ \{f_{\pi'}(z)\}\times \delta_N \mathbb{B}_{2n-2}(f_{\pi''}(z))\]
    and since $\delta_N \mathbb{B}_{2n-2}(f_{\pi''}(z))$ contains an element of $E_N''$ by construction, $f(a_{N+2}\mathbb{B}_{2n})$ contains an element of $E_N\subset E$.
\end{proof}

\begin{Rem}
    The set $\pi'(E)$ is discrete, but not very tame. Suppose it was very tame and $\phi\in \mathrm{Aut}_1(\C^2)$ the automorphism with $\phi(\pi'(E))=\{0\}\times \mathbb{N}$. Then
    \[\psi(z,w)=(\phi(z),w)\in \Symplecto\]
    is a symplectic automorphism sending $E$ into the hyperplane $H:=\{0\}\times \C^{2n-1}$. Since $H\setminus\{0\}$ is avoidable by $\Gamma$, this leads to a contradiction.
\end{Rem}
\begin{question}
Is $E$ avoidable by holomorphic maps $f \colon \C^{2n}\to \C^{2n}$ with $f(0)=0$ and $Jf\equiv const.$? Is $E$ even very tame?
\end{question}

\section*{Funding}
The author was supported by the European Union (ERC Advanced grant HPDR, 101053085 to Franc Forstneri\v{c}) and grant N1-0237 from ARRS, Republic of Slovenia. The second, third and fourth author were partially supported by Schweizerische Nationalfonds Grant 00021-207335.

\section*{Conflict of Interest}
The authors have no relevant competing interest to disclose.

%
%

\end{document}